\let\cl@chapter\undefined
\newcommand{\R}{\mathbb{R}}
\newcommand{\E}{\mathbb{E}}
\renewcommand{\L}{\mathcal{L}}
\renewcommand{\d}{\textup{d}}
\newcommand{\F}{\mathscr{F}}
\newcommand{\mtx}[1]{\mathbf{#1}}
\DeclareMathOperator{\Tr}{Tr}
\DeclareMathOperator{\diam}{diam}
\DeclareMathOperator{\dist}{dist}
\DeclareMathOperator{\cov}{cov}
\DeclareMathOperator{\HS}{HS}
\renewcommand{\d}{\,\textup{d}}
\newcommand{\at}[1]{{\color{red}{AT: #1}}} 
\journalname{Foundations of Computational Mathematics}
\begin{document}

\title{Learning elliptic partial differential equations with randomized linear algebra
\thanks{Communicated by Arieh Iserles.\\
\newline
This work is supported by the EPSRC Centre for Doctoral Training in Industrially Focused Mathematical Modelling (EP/L015803/1) in collaboration with Simula Research Laboratory and by the National Science Foundation grants DMS-1818757, DMS-1952757, and DMS-2045646.}
}

\titlerunning{Learning elliptic PDEs with randomized linear algebra}

\author{Nicolas Boull\'e  \and Alex Townsend}


\institute{N. Boull\'e \at
              Mathematical Institute, University of Oxford, Oxford, OX2 6GG, UK \\
              \email{boulle@maths.ox.ac.uk}
           \and
           A. Townsend \at
             Department of Mathematics, Cornell University, Ithaca, NY 14853, USA \\
             \email{townsend@cornell.edu}
}

\date{Received: 1 February 2021 / Revised: 18 November 2021 / Accepted: 20 November 2021}

\maketitle

\begin{abstract}
Given input-output pairs of an elliptic partial differential equation (PDE) in three dimensions, we derive the first theoretically-rigorous scheme for learning the associated Green's function $G$. By exploiting the hierarchical low-rank structure of $G$, we show that one can construct an approximant to $G$ that converges almost surely and achieves a relative error of $\mathcal{O}(\Gamma_\epsilon^{-1/2}\log^3(1/\epsilon)\epsilon)$ using at most $\mathcal{O}(\epsilon^{-6}\log^4(1/\epsilon))$ input-output training pairs with high probability, for any $0<\epsilon<1$. The quantity $0<\Gamma_\epsilon\leq 1$ characterizes the quality of the training dataset. Along the way, we extend the randomized singular value decomposition algorithm for learning matrices to Hilbert--Schmidt operators and characterize the quality of covariance kernels for PDE learning.
\keywords{Data-driven discovery of PDEs, randomized SVD, Green's function, Hilbert--Schmidt operators, low-rank approximation}
\subclass{65N80, 35J08, 35R30, 60G15, 65F55}
\end{abstract}

\section{Introduction}
Can one learn a differential operator from pairs of solutions and righthand sides? If so, how many pairs are required? These two questions have received significant research attention~\cite{feliu2020meta,li2020fourier,long2018pde,pang2019neural}. From data, one hopes to eventually learn physical laws of nature or conservation laws that elude scientists in the biological sciences~\cite{yazdani2020systems}, computational fluid dynamics~\cite{raissi2020hidden}, and computational physics~\cite{raissi2018deep}. The literature contains many highly successful practical schemes based on deep learning techniques~\cite{meng2020ppinn,raissi2019physics}. However, the challenge remains to understand when and why deep learning is effective theoretically. This paper describes the first theoretically-justified scheme for discovering scalar-valued elliptic partial differential equations (PDEs) in three variables from input-output data and provides a rigorous learning rate. While our novelties are mainly theoretical, we hope to motivate future practical choices in PDE learning.

We suppose that there is an unknown second-order uniformly elliptic linear PDE operator\footnote{Here, $L^2(D)$ is the space of square-integrable functions defined on $D$, $\mathcal{H}^k(D)$ is the space of $k$ times weakly differentiable functions in the $L^2$-sense, and $\mathcal{H}^1_0(D)$ is the closure of $\mathcal{C}_c^\infty(D)$ in $\mathcal{H}^1(D)$. Here, $\mathcal{C}_c^\infty(D)$ is the space of infinitely differentiable compactly supported functions on $D$. Roughly speaking, $\mathcal{H}^1_0(D)$ are the functions in $\mathcal{H}^1(D)$ that are zero on the boundary of $D$.} $\mathcal{L}:\mathcal{H}^2(D)\cap\mathcal{H}_0^1(D) \to L^2(D)$ with a bounded domain $D\subset\mathbb{R}^3$ with Lipschitz smooth boundary~\cite{evans10}, which takes the form
\begin{equation} 
\mathcal{L}u(x) = -\nabla \cdot \left(A(x) \nabla u\right)+c(x)\cdot\nabla u+d(x)u, \quad x\in D, \quad u|_{\partial D} = 0.
\label{eq:PDEForm}
\end{equation}
Here, for every $x\in D$, we have that $A(x)\in\mathbb{R}^{3\times 3}$ is a symmetric positive definite matrix with bounded coefficient functions so that\footnote{For $1\leq r\leq \infty$, we denote by $L^r(D)$ the space of functions defined on the domain $D$ with finite $L^r$ norm, where $\|f\|_{r}$ = $(\int_D|f|^r\d x)^{1/r}$ if $r<\infty$, and $\|f\|_{\infty}=\inf\{C>0:|f(x)|\leq C \text{ for almost every }x\in D\}$.} $A_{ij}\in L^{\infty}(D)$, $c\in L^r(D)$ with $r\geq 3$, $d\in L^s(D)$ for $s\geq 3/2$, and $d(x)\geq 0$~\cite{kim2019green}. We emphasize that the regularity requirements on the variable coefficients are quite weak. 

The goal of PDE learning is to discover the operator $\mathcal{L}$ from $N\geq 1$ input-output pairs, i.e., $\{(f_j,u_j)\}_{j=1}^N$, where $\mathcal{L}u_j = f_j$ and $u_j|_{\partial D} = 0$ for $1\leq j\leq N$. There are two main types of PDE learning tasks: (1) Experimentally-determined input-output pairs, where one must do the best one can with the predetermined information and (2) Algorithmically-determined input-output pairs, where the data-driven learning algorithm can select $f_1,\ldots,f_N$ for itself. In this paper, we focus on the PDE learning task where we have algorithmically-determined input-output pairs.  In particular, we suppose that the functions $f_1,\ldots,f_N$ are generated at random and are drawn from a Gaussian process (GP) (see~\cref{sec_GP}).  To keep our theoretical statements manageable, we restrict our attention to PDEs of the form: 
\begin{equation} 
\mathcal{L}u = -\nabla \cdot \left(A(x) \nabla u\right), \quad x\in D, \quad u|_{\partial D} = 0.
\label{eq:PDEsimple}
\end{equation}
Lower-order terms in~\cref{eq:PDEForm} should cause few theoretical problems~\cite{bebendorf2003existence}, though our algorithm and our bounds get far more complicated. 

The approach that dominates the PDE learning literature is to directly learn $\mathcal{L}$ by either (1) Learning parameters in the PDE~\cite{bonito2017diffusion,zhao2020learning}, (2) Using neural networks to approximate the action of the PDE on functions~\cite{raissi2018deep,raissi2018hidden,Karniadakis3,raissi2019physics,raissi2020hidden}, or (3) Deriving a model by composing a library of operators with sparsity considerations~~\cite{Brunton,maddu2019stability,Rudy,schaeffer2017learning,voss2004nonlinear,wang2019variational}.  Instead of trying to learn the unbounded, closed operator $\mathcal{L}$ directly, we follow~\cite{boulle2021data,feliu2020meta,gin2020deepgreen} and discover the Green's function associated with $\mathcal{L}$. That is, we attempt to learn the function $G:D\times D\rightarrow\mathbb{R}^+\cup\{\infty\}$ such that~\cite{evans10}
\begin{equation} 
u_j(x) = \int_{D} G(x,y)f_j(y) \d y,\qquad x\in D, \qquad 1\leq j\leq N.
\label{eq:IntegralOperator}
\end{equation} 
Seeking $G$, as opposed to $\mathcal{L}$, has several theoretical benefits:
\begin{enumerate}[leftmargin=*,noitemsep]

\item The integral operator in~\cref{eq:IntegralOperator} is compact~\cite{edmunds2013bounded}, while $\mathcal{L}$ is only closed~\cite{edmunds2018spectral}. This allows $G$ to be rigorously learned by input-output pairs $\{(f_j,u_j)\}_{j=1}^N$, as its range can be approximated by finite-dimensional spaces (see~\cref{th_Green}). 

\item It is known that $G$ has a hierarchical low-rank structure~\cite[Thm.~2.8]{bebendorf2003existence}: for $0<\epsilon<1$, there exists a function $G_k(x,y) = \sum_{j=1}^k g_j(x)h_j(y)$ with $k = \mathcal{O}(\log^4(1/\epsilon))$ such that~\cite[Thm.~2.8]{bebendorf2003existence}
\[
\left\|G - G_k\right\|_{L^2(X\times Y)}\leq \epsilon\left\|G\right\|_{L^2(X\times \hat{Y})},
\]
where $X,Y\subseteq D$ are sufficiently separated domains, and $Y\subseteq\hat{Y}\subseteq D$ denotes a larger domain than $Y$ (see \cref{theo_bebendorf} for the definition). The further apart $X$ and $Y$, the faster the singular values of $G$ decay. Moreover, $G$ also has an off-diagonal decay property~\cite{gruter1982green,kang2010global}:
\[
G(x,y) \leq \frac{c}{\| x - y\|_2}\|G\|_{L^2(D\times D)}, \qquad x\neq y\in D,
\]
where $c$ is a constant independent of $x$ and $y$. Exploiting these structures of $G$ leads to a rigorous algorithm for constructing a global approximant to $G$ (see~\cref{sec_approx_Green}). 

\item The function $G$ is smooth away from its diagonal, allowing one to efficiently approximate it~\cite{gruter1982green}.

\end{enumerate} 
Once a global approximation $\tilde{G}$ has been constructed for $G$ using input-output pairs, given a new righthand side $f$ one can directly compute the integral in~\cref{eq:IntegralOperator} to obtain the corresponding solution $u$  to~\cref{eq:PDEForm}. Usually, numerically computing the integral in~\cref{eq:IntegralOperator} must be done with sufficient care as $G$ possesses a singularity when $x = y$. However, our global approximation $\tilde{G}$ has an hierarchical structure and is constructed as $0$ near the diagonal. Therefore, for each fixed $x\in D$, we simply recommend that $\int_{D} \tilde{G}(x,y) f_j(y)\d y$ is partitioned into the panels that corresponds to the hierarchical decomposition, and then discretized each panel with a quadrature rule.

\subsection{Main contributions} \label{sec_main_contrib}
There are two main contributions in this paper: (1) The generalization of the randomized singular value decomposition (SVD) algorithm for learning matrices from matrix-vector products to Hilbert--Schmidt (HS) operators and (2) A theoretical learning rate for discovering Green's functions associated with PDEs of the form~\cref{eq:PDEsimple}. These contributions are summarized in \cref{th_tropp_random_svd_Frob,th_Green}.

\cref{th_tropp_random_svd_Frob} says that, with high probability, one can recover a near-best rank $k$ HS operator using $k+p$ operator-function products, for a small integer $p$. In the bound of the theorem, a quantity, denoted by $0< \gamma_k \leq 1$, measures the quality of the input-output training pairs (see~\cref{sec:TwoWarnings,sec_quality_kernel}). We then combine~\cref{th_tropp_random_svd_Frob} with the theory of Green's functions for elliptic PDEs to derive a theoretical learning rate for PDEs.

In \cref{th_Green}, we show that Green's functions associated with uniformly elliptic PDEs in three dimensions can be recovered using $N=\mathcal{O}(\epsilon^{-6}\log^4(1/\epsilon))$ input-output pairs $(f_j,u_j)_{j=1}^N$  to within an accuracy of $\mathcal{O}(\Gamma_\epsilon^{-1/2}\log^3(1/\epsilon)\epsilon)$ with high probability, for $0<\epsilon<1$. Our learning rate associated with uniformly elliptic PDEs in three variables is therefore $\mathcal{O}(\epsilon^{-6}\log^4(1/\epsilon))$. The quantity $0< \Gamma_\epsilon\leq 1$ (defined in~\cref{sec_green_reconst_adm}) measures the quality of the GP used to generate the random functions $\{f_j\}_{j=1}^N$ for learning $G$. We emphasize that the number of training pairs is small only if the GP's quality is high. The probability bound in \cref{th_Green} implies that the constructed approximation is close to $G$ with high probability and converges almost surely to the Green's function as $\epsilon\to 0$.

\subsection{Organization of paper}
The paper is structured as follows. In \cref{sec:background}, we briefly review HS operators and GPs. We then generalize the randomized SVD algorithm to HS operators in~\cref{sec_random_SVD}. Next, in~\cref{sec_approx_Green}, we characterize the learning rate for PDEs of the form of~\cref{eq:PDEsimple} (see~\cref{th_Green}). Finally, we conclude and discuss potential further directions in~\cref{sec_further_work}.

\section{Background material}\label{sec:background} 
We begin by reviewing quasimatrices (see~\cref{sec_Quasimatrices}), HS operators (see~\cref{sec_HS}), and GPs (see~\cref{sec_GP}).  
\subsection{Quasimatrices}\label{sec_Quasimatrices} 
Quasimatrices are an infinite dimensional analogue of tall-skinny matrices~\cite{townsend2015continuous}.  Let $D_1,D_2\subseteq\R^d$ be two domains with $d\geq 1$ and denote by $L^2(D_{1})$ the space of square-integrable functions defined on $D_{1}$.   Many of results in this paper are easier to state using quasimatrices. We say that $\mtx{\Omega}$ is a $D_1\times k$ quasimatrix, if $\mtx{\Omega}$ is a matrix with $k$ columns where each column is a function in $L^2(D_1)$. That is,
\[
\mtx{\Omega} = \begin{bmatrix} \omega_1 \, | & \! \cdots \! & | \, \omega_k \end{bmatrix}, \qquad \omega_j\in L^2(D_1).
\]
Quasimatrices are useful to define analogues of matrix operations for HS operators~\cite{de1991alternative,stewart1998matrix,townsend2015continuous,trefethen1997numerical}. For example, if $\F:L^2(D_1)\to L^2(D_2)$ is a HS operator, then we write $\F\mtx{\Omega}$ to denote the quasimatrix obtained by applying $\F$ to each column of $\mtx{\Omega}$. Moreover, we write $\mtx{\Omega}^*\mtx{\Omega}$ and $\mtx{\Omega}\mtx{\Omega}^*$ to mean the following:
\[
\mtx{\Omega}^*\mtx{\Omega} = \begin{bmatrix}\langle \omega_1,\omega_1 \rangle & \cdots & \langle \omega_1,\omega_k \rangle\\ \vdots & \ddots &  \vdots\\
\langle \omega_k,\omega_1 \rangle & \cdots & \langle \omega_k,\omega_k \rangle \end{bmatrix}, \qquad \mtx{\Omega}\mtx{\Omega}^* = \sum_{j=1}^k \omega_j(x)\omega_j(y),
\]
where $\langle \cdot, \cdot \rangle$ is the $L^2(D_1)$ inner-product.  Many operations for rectangular matrices in linear algebra can be generalized to quasimatrices~\cite{townsend2015continuous}.

\subsection{Hilbert--Schmidt operators} \label{sec_HS}
HS operators are an infinite dimensional analogue of matrices acting on vectors. Since $L^2(D_{1})$ is a separable Hilbert space, there is a complete orthonormal basis $\{e_j\}_{j=1}^\infty$ for $L^2(D_{1})$. We call $\F: L^2(D_1)\to L^2(D_2)$ a HS operator~\cite[Ch.~4]{hsing2015theoretical} with HS norm $\|\F\|_{\HS}$ if $\F$ is linear and
\[
\|\F\|_{\HS} \coloneq \left(\sum_{j=1}^\infty \|\F e_j\|_{L^2(D_2)}^2\right)^{1/2}<\infty.
\]
The archetypical example of an HS operator is an HS integral operator $\F:L^2(D_1)\to L^2(D_2)$ defined by
\[
(\F f)(x)=\int_{D_1} G(x,y)f(y)\d y, \qquad f\in L^2(D_1),\quad x\in D_2,
\]
where $G\in L^2(D_2\times D_1)$ is the kernel of $\F$ and $\|\F\|_{\HS}=\|G\|_{L^2(D_2\times D_1)}$. Since HS operators are compact operators, they have an SVD~\cite[Thm.~4.3.1]{hsing2015theoretical}. That is, 
there exists a nonnegative sequence $\sigma_1\geq \sigma_2\geq \cdots\geq 0$ and an orthonormal basis $\{q_{j}\}_{j=1}^\infty$ for $L^2(D_2)$ such that for any $f\in L^2(D_1)$ we have
\begin{equation} 
\mathscr{F}f = \sum_{\substack{j=1\\\sigma_j>0}}^{\infty}\sigma_j\langle e_j, f\rangle q_{j},
\label{eq:HS_SVD} 
\end{equation} 
where the equality holds in the $L^2(D_2)$ sense. Note that we use the complete SVD, which includes singular functions associated with the kernel of $\F$. Moreover, one finds that $\|\F\|_{\HS}^2 = \sum_{j=1}^\infty \sigma_j^2$, which shows that the HS norm is an infinite dimensional analogue of the Frobenius matrix norm $\|\cdot \|_{\textup{F}}$. In the same way that truncating the SVD after $k$ terms gives a best rank $k$ matrix approximation, truncating~\cref{eq:HS_SVD} gives a best approximation in the HS norm. That is,~\cite[Thm.~4.4.7]{hsing2015theoretical}
\[
\|\F-\F_k\|_{\HS}^2=\sum_{j=k+1}^\infty \sigma_j^2, \qquad \F_kf = \sum_{j=1}^{k}\sigma_j\langle e_j,f\rangle q_{j}, \quad f\in L^2(D_1).
\]
In this paper, we are interested in constructing an approximation to $G$ in~\cref{eq:IntegralOperator} from input-output pairs $\{(f_j,u_j)\}_{j=1}^N$ such that $u_j = \F f_j$.

Throughout this paper, the HS operator denoted by $\mtx{\Omega}\mtx{\Omega}^*\F : L^2(D_1)\to L^2(D_2)$ is given by $\mtx{\Omega}\mtx{\Omega}^*\F f = \sum_{j=1}^k \langle \omega_j,\F f\rangle \omega_j$. If we consider the operator $\mtx{\Omega}^*\F: L^2(D_1)\rightarrow \mathbb{R}^k$, then $\|\mtx{\Omega}^*\F\|_{\HS}^2 = \sum_{j=1}^\infty \|\F e_j\|_{2}^2$. Similarly, for $\F\mtx{\Omega}: \mathbb{R}^k\rightarrow L^2(D_2)$ we have $\|\F\mtx{\Omega}\|_{\HS}^2 = \sum_{j=1}^k \|\F \tilde{e}_j\|_{L^2(D_2)}^2$, where $\{\tilde{e}_j\}_{j=1}^k$ is an orthonormal basis of $\R^k$.   Moreover, if $\mtx{\Omega}$ has full column rank then $\mtx{P}_{\mtx{\Omega}}\F = \mtx{\Omega}(\mtx{\Omega}^*\mtx{\Omega})^{\dagger}\mtx{\Omega}^*\F$ is the orthogonal projection of the range of $\F$ onto the column space of $\mtx{\Omega}$. Here, $(\mtx{\Omega}^*\mtx{\Omega})^{\dagger}$ is the pseudo-inverse of $\mtx{\Omega}^*\mtx{\Omega}$.

\subsection{Gaussian processes} \label{sec_GP}
A GP is an infinite dimensional analogue of a multivariate Gaussian distribution and a function drawn from a GP is analogous to a randomly generated vector. If $K: D\times D\to\R$ is a continuous symmetric positive semidefinite kernel, where $D\subseteq\mathbb{R}^d$ is a domain, then a GP is a stochastic process $\{X_t,\,t\geq 0\}$ such that for every finite set of indices $t_1,\ldots,t_n\geq 0$ the vector of random variables $(X_{t_1},\ldots,X_{t_n})$ is a multivariate Gaussian distribution with mean $(0,\ldots,0)$ and covariance $K_{ij} = K(t_i,t_j)$ for $1\leq i,j\leq n$. We denote a GP with mean $(0,\ldots,0)$ and covariance $K$ by $\mathcal{GP}(0,K)$.

\begin{figure}[htbp]
\vspace{0.5cm}
\begin{center}
\begin{overpic}[width= \textwidth]{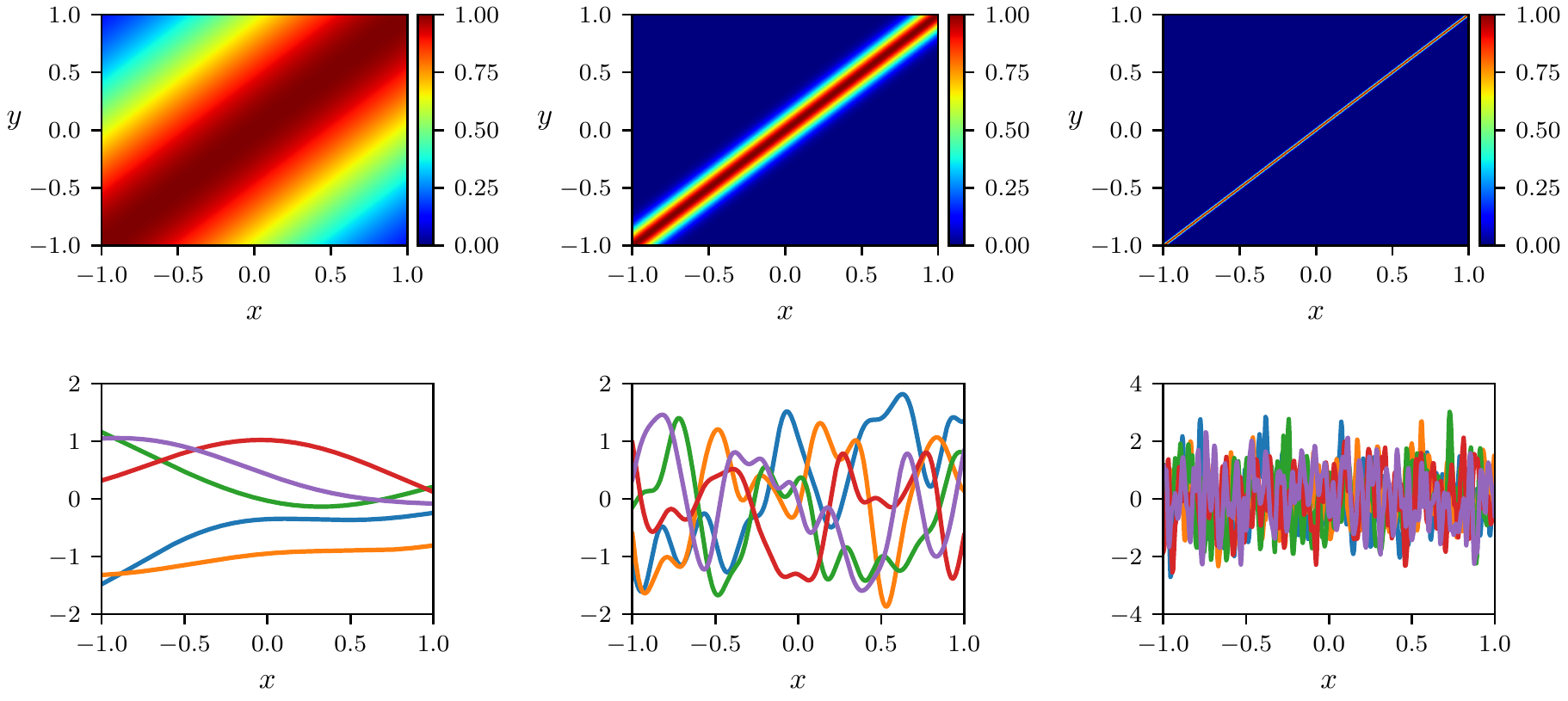}
\put(14,46){$\ell=1$}
\put(48,46){$\ell=0.1$}
\put(81,46){$\ell=0.01$}
\end{overpic}
\end{center}
\caption{Squared-exponential covariance kernel $K_{\text{SE}}$ with parameter $\ell=1,0.1,0.01$ (top row) and five functions sampled from $\mathcal{GP}(0, K_{\text{SE}})$ (bottom row).}
\label{fig_examples_GP}
\end{figure}

Since $K$ is a continuous symmetric positive semidefinite kernel, it has nonnegative eigenvalues $\lambda_1\geq \lambda_2\geq \cdots \geq 0$ and there is an orthonormal basis of eigenfunctions $\{\psi_j\}_{j=1}^\infty$ of $L^2(D)$ such that~\cite[Thm.~4.6.5]{hsing2015theoretical}:
\begin{equation} \label{eq_covariance_kernel}
K(x,y)=\sum_{j=1}^\infty\lambda_j\psi_j(x)\psi_j(y),\qquad \int_{D} K(x,y)\psi_j(y)\d y= \lambda_j \psi_j(x),\qquad x,y\in D,
\end{equation}
where the infinite sum is absolutely and uniformly convergent~\cite{mercer1909fun}. In addition, we define the trace of the covariance kernel $K$ by $\smash{\Tr(K)\coloneq \sum_{j=1}^\infty\lambda_j}<\infty$. The eigendecomposition of $K$ gives an algorithm for generating functions from $\smash{\mathcal{GP}(0,K)}$.  In particular, if $\omega\sim\sum_{j=1}^{\infty} \sqrt{\lambda_j} c_j\psi_j$, where the coefficients $\{c_j\}_{j=1}^\infty$ are independent and identically distributed standard Gaussian random variables, then $\omega\sim\mathcal{GP}(0,K)$~\cite{karhunen1946lineare,loeve1946functions}. We also have
\[
\E\!\left[\|\omega\|_{L^2(D)}^2\right]=\sum_{j=1}^\infty\lambda_j\E\!\left[c_j^2\right]\|\psi_j\|_{L^2(D)}^2=\sum_{j=1}^\infty \lambda_j=\int_{D}K(y,y)\,\d y<\infty,
\]
where the last equality is analogous to the fact that the trace of a matrix is equal to the sum of its eigenvalues. In this paper, we restrict our attention to GPs with positive definite covariance kernels so that the eigenvalues of $K$ are strictly positive. 

In \cref{fig_examples_GP}, we display the squared-exponential kernel defined as $K_{\text{SE}}(x,y)=\exp(-|x-y|^2/(2\ell^2))$ for $x,y\in[-1,1]$~\cite[Chapt.~4]{rasmussen2006gaussian} with parameters $\ell = 1,0.1,0.01$ together with sampled functions from $\mathcal{GP}(0,K_{\text{SE}})$. We observe that the functions become more oscillatory as the length-scale parameter $\ell$ decreases and hence the numerical rank of the kernel increases or, equivalently, the associated eigenvalues $\{\lambda_j\}$ decay more slowly to zero.

\section{Low-rank approximation of Hilbert--Schmidt operators} \label{sec_random_SVD}
In a landmark paper, Halko, Martinsson, and Tropp proved that one could learn the column space of a finite matrix---to high accuracy and with a high probability of success---by using matrix-vector products with standard Gaussian random vectors~\cite{halko2011finding}. We now set out to generalize this from matrices to HS operators. Alternative randomized low-rank approximation techniques such as the generalized Nystr\"om method~\cite{nakatsukasa2020fast} might also be generalized in a similar manner.
Since the proof is relatively long, we state our final generalization now.

\begin{theorem} \label{th_tropp_random_svd_Frob}
Let $D_1,D_2\subseteq \mathbb{R}^d$ be domains with $d\geq 1$ and $\F:L^2(D_1)\to L^2(D_2)$ be a HS operator. Select a target rank $k\geq 1$, an oversampling parameter $p\geq 2$, and a $D_1\times (k+p)$ quasimatrix $\mtx{\Omega}$ such that each column is drawn from $\mathcal{GP}(0,K)$, where $K:D_1\times D_1\to\R$ is a continuous symmetric positive definite kernel with eigenvalues $\lambda_1\geq \lambda_2\geq \cdots>0$. If $\mtx{Y}=\F\mtx{\Omega}$, then
\begin{equation}
\label{eq:MainExpectationBound}
\mathbb{E}\!\left[\|\F - \mtx{P}_\mtx{Y}\F\|_{\HS}\right]\leq \left(1+\sqrt{\frac{1}{\gamma_k}\frac{k(k+p)}{p-1}}\,\right)\left(\sum_{j=k+1}^{\infty}\sigma_j^2\right)^{1/2},
\end{equation}
where $\gamma_k = k/(\lambda_1\Tr(\mtx{C}^{-1}))$ with $\mtx{C}_{ij}=\int_{D_1\times D_1}v_i(x)K(x,y)v_j(y)\d x\d y$ for $1\leq i,j\leq k$. Here, $\mtx{P}_\mtx{Y}$ is the orthogonal projection onto the vector space spanned by the columns of $\mtx{Y}$, $\sigma_j$ is the $j$th singular value of $\F$, and $v_j$ is the $j$th right singular vector of $\F$. 

Assume further that $p\geq 4$, then for any $s,t\geq 1$, we have
\begin{equation}
\label{eq:MainProbabilityBound}
\|\F-\mtx{P}_{\mtx{Y}}\F\|_{\HS}\leq \sqrt{1+ t^2s^2 \frac{3}{\gamma_k}\frac{k(k+p)}{p+1}\sum_{j=1}^\infty \frac{\lambda_j}{\lambda_1}}\,\left(\sum_{j=k+1}^\infty\sigma_j^2\right)^{1/2},
\end{equation}
with probability $\geq 1 - t^{-p}-[s e^{-(s^2-1)/2}]^{k+p}$.
\end{theorem}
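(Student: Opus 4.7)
The plan is to adapt the Halko--Martinsson--Tropp (HMT) randomized-SVD analysis~\cite{halko2011finding} to HS operators, with two modifications: replace matrix--vector products by operator--function products via the quasimatrix calculus of \cref{sec_Quasimatrices,sec_HS}, and account for the fact that the test functions $\omega_j$ are drawn from a general $\mathcal{GP}(0,K)$ rather than the standard isotropic Gaussian. The first step is a deterministic bound. Using the SVD \cref{eq:HS_SVD}, let $\mtx{V}$ be the quasimatrix whose columns are the right singular functions $\{v_j\}$ of $\F$ and decompose
\[
\mtx{V}^*\mtx{\Omega} = \begin{bmatrix} \mtx{\Omega}_1 \\ \mtx{\Omega}_2 \end{bmatrix},
\]
where $\mtx{\Omega}_1$ is the top $k\times(k+p)$ block with entries $\langle v_i,\omega_j\rangle$, $1\le i\le k$. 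The linear-algebraic identity underpinning HMT's Theorem~9.1 transfers to the quasimatrix setting and gives, on the event that $\mtx{\Omega}_1$ has full row rank,
\[
\|\F-\mtx{P}_{\mtx{Y}}\F\|_{\HS}^{2} \le \sum_{j\ge k+1}\sigma_j^{2} + \|\Sigma_2\mtx{\Omega}_2\mtx{\Omega}_1^{\dagger}\|_{\textup{F}}^{2},
\]
with $\Sigma_2 = \diag(\sigma_{k+1},\sigma_{k+2},\dots)$.

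Randomness now enters only through $\mtx{V}^*\mtx{\Omega}$, whose entries are jointly Gaussian with covariance $\int_{D_1\times D_1}v_i(x)K(x,y)v_j(y)\d x\d y$. In particular $\mtx{\Omega}_1$ has i.i.d.\ $\mathcal{N}(0,\mtx{C})$ columns, so $\mtx{\Omega}_1 = \mtx{C}^{1/2}\mtx{G}_1$ for a standard Gaussian matrix $\mtx{G}_1$ and hence $\mtx{\Omega}_1^{\dagger} = \mtx{G}_1^{\dagger}\mtx{C}^{-1/2}$; this factorization is the origin of the factor $1/\gamma_k = \lambda_1\Tr(\mtx{C}^{-1})/k$. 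For the expectation bound~\cref{eq:MainExpectationBound} I would condition on $\mtx{\Omega}_1$, write $\mtx{\Omega}_2$ as its conditional mean plus an independent Gaussian residual, and invoke the standard moment $\E\|\mtx{G}_1^{\dagger}\|_{\textup{F}}^{2} = k/(p-1)$. For the probability bound~\cref{eq:MainProbabilityBound} I would instead combine two tail estimates via a union bound: the Chen--Dongarra / Gordon tail for the spectral norm of a fat Gaussian pseudoinverse, which produces the $t^{-p}$ probability term together with the $3k(k+p)/(p+1)$ factor, and a Davidson--Szarek-type bound for $\|\Sigma_2\mtx{\Omega}_2\|_{\textup{F}}$ obtained after pre-whitening with the Karhunen--Lo\`eve expansion \cref{eq_covariance_kernel} and pulling out $\lambda_1$, which produces the $[se^{-(s^2-1)/2}]^{k+p}$ term and the $\sum_{j}\lambda_j/\lambda_1$ factor. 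Substituting into the deterministic inequality yields \cref{eq:MainProbabilityBound}.

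The step I expect to be most delicate is the analysis of $\Sigma_2\mtx{\Omega}_2$: in the isotropic HMT setting this block is independent of $\mtx{\Omega}_1$ and has i.i.d.\ standard Gaussian entries, but here neither holds, because the kernel $K$ couples all singular directions of $\F$. Separating these correlations while still producing a bound that depends only on the scalar $\gamma_k$ from the top block and the global quantity $\sum_j\lambda_j/\lambda_1$ from the tail, and doing so despite $\mtx{\Omega}_2$ having infinitely many rows, is where the care is required; the rest is careful bookkeeping of the HMT argument in quasimatrix notation.
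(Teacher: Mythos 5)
Your deterministic bound and the identification of $\mtx{\Omega}_1$ as Gaussian with i.i.d.\ $\mathcal{N}(0,\mtx{C})$ columns agree with the paper, and the whitening $\mtx{\Omega}_1^{\dagger}=\mtx{G}_1^{\dagger}\mtx{C}^{-1/2}$ does reproduce the Wishart identity $\E\|\mtx{\Omega}_1^{\dagger}\|_{\textup{F}}^2=\Tr(\mtx{C}^{-1})/(p-1)$ after a short computation. The gap is in how you propose to handle the term $\|\mtx{\Sigma}_2\mtx{\Omega}_2\mtx{\Omega}_1^{\dagger}\|_{\HS}$ once the independence of $\mtx{\Omega}_1$ and $\mtx{\Omega}_2$ is lost. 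You suggest conditioning on $\mtx{\Omega}_1$ and decomposing $\mtx{\Omega}_2$ into its conditional mean plus an independent residual --- exactly the HMT Proposition~10.1/10.2 route --- and then you flag that separating the correlations ``is where the care is required'' without saying how. That is a real wall: the conditional mean $\E[\mtx{\Omega}_2\mid\mtx{\Omega}_1]$ is a nonzero linear function of $\mtx{\Omega}_1$ precisely because $K$ couples the singular directions, and the resulting cross term $\|\mtx{\Sigma}_2\,\E[\mtx{\Omega}_2\mid\mtx{\Omega}_1]\mtx{\Omega}_1^{\dagger}\|_{\HS}^2$ is not something the standard HMT argument produces or controls.

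The paper avoids the problem entirely rather than solving it. From the deterministic bound it uses submultiplicativity of the HS/Frobenius norm to write $\|\mtx{\Sigma}_2\mtx{\Omega}_2\mtx{\Omega}_1^{\dagger}\|_{\HS}\le\|\mtx{\Sigma}_2\mtx{\Omega}_2\|_{\HS}\,\|\mtx{\Omega}_1^{\dagger}\|_{\textup{F}}$ (indeed $\le\|\mtx{\Sigma}_2\|_{\HS}\,\|\mtx{\Omega}_2\|_{\HS}\,\|\mtx{\Omega}_1^{\dagger}\|_{\textup{F}}$ for the probabilistic part), and then (i) for the expectation applies Cauchy--Schwarz in $L^2(\mathbb{P})$ to separate the two factors, which is valid without any independence, combined with a lemma showing $\E\|\mtx{\Sigma}_2\mtx{\Omega}_2\|_{\HS}^2\le\lambda_1(k+p)\|\mtx{\Sigma}_2\|_{\HS}^2$ (entrywise Rayleigh-quotient bound, not a conditional-mean argument); and (ii) for the probability applies a union bound to a Frobenius-norm tail for $\|\mtx{\Omega}_1^{\dagger}\|_{\textup{F}}$, obtained via the scaled $\chi^{-2}$ law of the diagonal of an inverse Wishart matrix (generalizing HMT Theorem~A.7), and a Chernoff bound on the sum of weighted $\chi^2$ variables constituting $\|\mtx{\Omega}\|_{\HS}^2$, not a Davidson--Szarek bound on $\|\mtx{\Sigma}_2\mtx{\Omega}_2\|_{\textup{F}}$. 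That last point matters in two ways you should address: the Chen--Dongarra and Davidson--Szarek inequalities are tailored to standard Gaussian matrices (and operator norms), and the paper explicitly notes no such concentration results are known for non-standard Gaussian quasimatrices --- which is why it settles for the cruder Frobenius/HS bounds and the resulting extra $(k+p)$ factor relative to HMT. Your sketch leaves the hardest step --- decoupling the dependent blocks --- as an acknowledged open difficulty, whereas the paper's submultiplicativity-plus-Cauchy--Schwarz device makes it disappear.
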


We remark that the term $[s e^{-(s^2-1)/2}]^{k+p}$ in the statement of \cref{th_tropp_random_svd_Frob} is bounded by $e^{-s^2}$ for $s\geq 2$ and $k+p\geq 5$. In the rest of the section, we prove this theorem.

\subsection{Three caveats that make the generalization non-trivial}\label{sec:TwoWarnings}
One might imagine that the generalization of the randomized SVD algorithm from matrices to HS operators is trivial, but this is not the case due to three caveats: 

\begin{enumerate}[leftmargin=*,noitemsep]
\item The randomized SVD on finite matrices always uses matrix-vector products with standard Gaussian random vectors~\cite{halko2011finding}. However, for GPs, one must always have a continuous kernel $K$ in $\mathcal{GP}(0,K)$, which discretizes to a non-standard multivariate Gaussian distribution. Therefore, we must extend~\cite[Thm.~10.5]{halko2011finding} to allow for non-standard multivariate Gaussian distributions.  The discrete version of our extension is the following:

\begin{corollary} \label{cor_finite_dim}
Let $\mtx{A}$ be a real $n_2\times n_1$ matrix with singular values $\sigma_1\geq \cdots \geq \sigma_{\min\{n_1,n_2\}}$. Choose a target rank $k\geq 1$ and an oversampling parameter $p\geq 2$. Draw an $n_1\times (k+p)$ Gaussian matrix, $\mtx{\Omega}$, with independent columns where each column is from a multivariate Gaussian distribution with mean $(0,\ldots,0)^\top$ and positive definite covariance matrix $\mtx{K}$. If $\mtx{Y}=\mtx{A}\mtx{\Omega}$, then the expected approximation error is bounded by
\begin{equation} \label{eq_expect_cor}
\mathbb{E}\left[\|\mtx{A}-\mtx{P}_\mtx{Y}\mtx{A}\|_{\textup{F}}\right]\leq \left(1+\sqrt{\frac{k+p}{p-1}\sum_{j=n_1-k+1}^{n_1}\frac{\lambda_1}{\lambda_j}}\,\right)\left(\sum_{j=k+1}^\infty \sigma_j^2\right)^{1/2},
\end{equation}
where $\lambda_1 \geq \cdots \geq \lambda_{n_1} > 0$ are the eigenvalues of $\mtx{K}$ and $\mtx{P}_\mtx{Y}$ is the orthogonal projection onto the vector space spanned by the columns of $\mtx{Y}$. Assume further that $p\geq 4$, then for any $s,t\geq 1$, we have
\[\|\mtx{A}-\mtx{P}_\mtx{Y}\mtx{A}\|_{\textup{F}}\leq \left(\!1+ ts\cdot \sqrt{\frac{3(k+p)}{p+1}\left(\sum_{j=1}^{n_1}\lambda_j\right)\sum_{j=n_1-k+1}^{n_1}\frac{1}{\lambda_j}}\,\right)\!\left(\sum_{j=k+1}^\infty\sigma_j^2\right)^{1/2},\]
with probability $\geq 1 - t^{-p}-[s e^{-(s^2-1)/2}]^{k+p}$.
\end{corollary}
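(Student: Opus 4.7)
The plan is to apply \cref{th_tropp_random_svd_Frob} in the finite-dimensional setting and then simplify the quality constant $\gamma_k$ using the spectrum of $\mtx{K}$. Identifying $D_1$ and $D_2$ with finite discrete sets of cardinalities $n_1$ and $n_2$ (equipped with counting measure) turns $L^2(D_1)$ and $L^2(D_2)$ into $\R^{n_1}$ and $\R^{n_2}$, the Hilbert--Schmidt operator $\F$ into the matrix $\mtx{A}$, the HS norm into the Frobenius norm, and a sample from $\mathcal{GP}(0,K)$ into a vector drawn from $N(0,\mtx{K})$. Under this identification, \cref{th_tropp_random_svd_Frob} supplies both the expectation bound \eqref{eq:MainExpectationBound} and the probability bound \eqref{eq:MainProbabilityBound} with
\[
\gamma_k=\frac{k}{\lambda_1\Tr(\mtx{C}^{-1})},\qquad \mtx{C}=V_k^{\!\top}\mtx{K}V_k,
\]
where $V_k=[\,v_1\,|\,\cdots\,|\,v_k\,]\in\R^{n_1\times k}$ collects the first $k$ right singular vectors of $\mtx{A}$ and therefore has orthonormal columns.

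The only real step of the proof is to bound $\Tr(\mtx{C}^{-1})$ from above using only the eigenvalues $\lambda_1\geq\cdots\geq\lambda_{n_1}$ of $\mtx{K}$, and this is where I expect the only nontrivial effort to lie. Let $\mu_1\geq\cdots\geq\mu_k>0$ be the eigenvalues of the symmetric positive definite matrix $\mtx{C}=V_k^{\!\top}\mtx{K}V_k$. The Poincar\'e (Cauchy) separation theorem applied to the compression of $\mtx{K}$ onto the $k$-dimensional subspace $\range(V_k)$ yields the interlacing inequality $\mu_i\geq \lambda_{n_1-k+i}$ for every $1\leq i\leq k$. Consequently,
\[
\Tr(\mtx{C}^{-1})=\sum_{i=1}^{k}\frac{1}{\mu_i}\leq \sum_{i=1}^{k}\frac{1}{\lambda_{n_1-k+i}}=\sum_{j=n_1-k+1}^{n_1}\frac{1}{\lambda_j}.
\]

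Substituting this inequality into \eqref{eq:MainExpectationBound} and cancelling the factors of $k$ produces exactly the expectation estimate \eqref{eq_expect_cor}. For the high-probability bound, the same substitution, together with $\sum_{j=1}^{\infty}\lambda_j/\lambda_1=\sum_{j=1}^{n_1}\lambda_j/\lambda_1$, converts the quantity under the square root in \eqref{eq:MainProbabilityBound} into
\[
1 + t^2 s^2\,\frac{3(k+p)}{p+1}\Bigl(\sum_{j=1}^{n_1}\lambda_j\Bigr)\sum_{j=n_1-k+1}^{n_1}\frac{1}{\lambda_j}.
\]
A final application of the elementary inequality $\sqrt{1+x}\leq 1+\sqrt{x}$ for $x\geq 0$ yields the corollary's probability bound, while the failure probability $t^{-p}+[s e^{-(s^2-1)/2}]^{k+p}$ is inherited unchanged from \cref{th_tropp_random_svd_Frob}. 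Beyond the interlacing step, every other piece of the argument is a direct specialization and a couple of lines of algebra, so I do not anticipate any further obstacle.
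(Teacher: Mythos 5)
Your proof is correct and takes essentially the same approach as the paper: specialize Theorem~\ref{th_tropp_random_svd_Frob} to finite dimensions and bound $1/\gamma_k$ via eigenvalue interlacing for the compression $\mtx{C}=V_k^{\top}\mtx{K}V_k$, which is precisely Lemma~\ref{lem_bound_sigma} applied with $m=n_1-k$, as the paper itself notes in Section~\ref{sec:TwoWarnings}. The only cosmetic difference is that you re-derive the bound $\Tr(\mtx{C}^{-1})\leq\sum_{j=n_1-k+1}^{n_1}\lambda_j^{-1}$ directly from the Poincar\'e separation theorem rather than citing the lemma, and you make the elementary step $\sqrt{1+x}\leq 1+\sqrt{x}$ explicit.
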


Choosing a covariance matrix $\mtx{K}$ with sufficient eigenvalue decay so that $\lim_{n_1\rightarrow\infty}\sum_{j=1}^{n_1} \lambda_j <\infty$ allows $\E[\|\mtx{\Omega}\|_{\textup{F}}^2]$ to remain bounded as $n_1\to\infty$. This is of interest when applying the randomized SVD algorithm to extremely large matrices and is critical for HS operators. A stronger statement of this result~\cite[Thm.~2]{boulle2021generalization} shows that prior information on $\mtx{A}$ can be incorporated into the covariance matrix to achieve lower approximation error than the randomized SVD with standard Gaussian vectors.

\item We need an additional essential assumption. The kernel in $\mathcal{GP}(0,K)$ is ``reasonable" for learning $\F$, where reasonableness is measured by the quantity $\gamma_k$ in \cref{th_tropp_random_svd_Frob}. If the first $k$ right singular functions of the HS operator $v_1,\ldots,v_k$ are spanned by the first $k+m$ eigenfunctions of $K$ $\psi_1,\ldots,\psi_{k+m}$, for some $m\in \mathbb{N}$, then (see~\cref{eq_lower_bound_gamma} and~\cref{lem_bound_sigma})
\[
\frac{1}{k}\sum_{j=1}^k\frac{\lambda_1}{\lambda_j}\leq \frac{1}{\gamma_k} \leq \frac{1}{k}\sum_{j=m+1}^{k+m}\frac{\lambda_1}{\lambda_j}.
\] 
In the matrix setting, this assumption always holds with $m=n_1-k$ (see~\cref{cor_finite_dim}) and one can have $\gamma_k = 1$ when $\lambda_1=\cdots=\lambda_{n_1}$~\cite[Thm.~10.5]{halko2011finding}.

\item Probabilistic error bounds for the randomized SVD in~\cite{halko2011finding} are derived using tail bounds for functions of standard Gaussian matrices~\cite[Sec.~5.1]{ledoux2001concentration}. Unfortunately, we are not aware of tail bounds for non-standard Gaussian quasimatrices. This results in a slightly weaker probability bound than~\cite[Thm.~10.7]{halko2011finding}.
\end{enumerate} 

\subsection{Deterministic error bound}
Apart from the three caveats, the proof of \cref{th_tropp_random_svd_Frob} follows the outline of the argument in~\cite[Thm.~10.5]{halko2011finding}. We define two quasimatrices $\mtx{U}$ and $\mtx{V}$ containing the left and right singular functions of $\F$ so that the $j$th column of $\mtx{V}$ is $v_j$. We also denote by $\mtx{\Sigma}$ the infinite diagonal matrix with the singular values of $\F$, i.e., $\sigma_1\geq\sigma_2\geq\cdots\geq0$, on the diagonal. Finally, for a fixed $k\geq 1$, we define the $D_1\times k$ quasimatrix as the truncation of $\mtx{V}$ after the first $k$ columns and $\mtx{V}_2$ as the remainder. Similarly, we split $\mtx{\Sigma}$ into two parts:
\[\begin{array}{@{}c@{}c@{}c@{}c@{}c@{}c}
        && k & \infty &\\
    \mtx{\Sigma} =  &\left. \begin{array}{c} \\ \\ \end{array} \!\!\! \right( &
    \begin{array}{c} \mtx{\Sigma}_1 \\ 0 \end{array} &
    \begin{array}{c} 0 \\ \mtx{\Sigma}_2 \end{array} &
    \left. \!\!\!\begin{array}{c} \\ \\ \end{array}  \right) &
    \begin{array}{c} k \\ \infty \\ \end{array}
\end{array}.\]
We are ready to prove an infinite dimensional analogue of~\cite[Thm.~9.1]{halko2011finding} for HS operators.

\begin{theorem}[Deterministic error bound] \label{th_tropp_deter_svd}
Let $\F:L^2(D_1)\to L^2(D_2)$ be a HS operator with SVD given in~\cref{eq:HS_SVD}. Let $\mtx{\Omega}$ be a $D_1\times \ell$ quasimatrix and $\mtx{Y}=\F\mtx{\Omega}$. If $\mtx{\Omega}_1=\mtx{V}_1^*\mtx{\Omega}$ and $\mtx{\Omega}_2=\mtx{V}_2^*\mtx{\Omega}$, then assuming $\mtx{\Omega}_1$ has full rank, we have
\[
\|\F-\mtx{P}_\mtx{Y}\F\|_{\HS}^2\leq \|\mtx{\Sigma}_2\|_{\HS}^2+\|\mtx{\Sigma}_2\mtx{\Omega}_2\mtx{\Omega}_1^{\dagger}\|_{\HS}^2,
\]
where $\mtx{P}_\mtx{Y}=\mtx{Y}(\mtx{Y}^*\mtx{Y})^\dagger\mtx{Y}^*$ is the orthogonal projection onto the space spanned by the columns of $\mtx{Y}$ and $\smash{\mtx{\Omega}_1^{\dagger} = (\mtx{\Omega}_1^*\mtx{\Omega}_1)^{-1}\mtx{\Omega}_1^*}$. 
\end{theorem}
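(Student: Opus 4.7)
The plan is to mirror the finite-dimensional argument of Halko--Martinsson--Tropp \cite[Thm.~9.1]{halko2011finding}, adapted to quasimatrices and an operator-valued diagonal, with a limiting step to handle the infinite dimensionality. I will not try to exploit any randomness at this stage: everything is pointwise for a fixed $\mtx{\Omega}$.

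First, I would use orthogonal invariance to reduce to a diagonal operator. Since the columns of $\mtx{U}$ and $\mtx{V}$ are orthonormal bases of $L^2(D_2)$ and $L^2(D_1)$, multiplication by $\mtx{U}^*$ is an isometry in HS norm, and the projector satisfies $\mtx{P}_\mtx{Y}=\mtx{U}\mtx{P}_{\mtx{U}^*\mtx{Y}}\mtx{U}^*$. Writing $\F=\mtx{U}\mtx{\Sigma}\mtx{V}^*$, setting $\mtx{Z}=\mtx{U}^*\mtx{Y}=\mtx{\Sigma}\mtx{V}^*\mtx{\Omega}$, and splitting along the first $k$ rows gives
\[
\mtx{Z}=\begin{pmatrix}\mtx{\Sigma}_1\mtx{\Omega}_1\\ \mtx{\Sigma}_2\mtx{\Omega}_2\end{pmatrix},\qquad \|\F-\mtx{P}_\mtx{Y}\F\|_{\HS}=\|(\mathbf{I}-\mtx{P}_\mtx{Z})\mtx{\Sigma}\|_{\HS},
\]
so it suffices to prove the bound for the ``diagonal'' problem on the right.

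Next, I would shrink the projector's range to obtain a tractable expression. Because $\mtx{\Omega}_1$ has full row rank, its pseudoinverse satisfies $\mtx{\Omega}_1\mtx{\Omega}_1^{\dagger}=\mathbf{I}_k$, and the column space of
\[
\mtx{Z}_0 \;:=\; \mtx{Z}\mtx{\Omega}_1^{\dagger}\;=\;\begin{pmatrix}\mtx{\Sigma}_1\\ \mtx{\Sigma}_2\mtx{\Omega}_2\mtx{\Omega}_1^{\dagger}\end{pmatrix}
\]
is contained in that of $\mtx{Z}$. The standard monotonicity $\|(\mathbf{I}-\mtx{P}_{\mtx{Z}})\mtx{\Sigma}\|_{\HS}\leq\|(\mathbf{I}-\mtx{P}_{\mtx{Z}_0})\mtx{\Sigma}\|_{\HS}$ (i.e., a larger projection produces a smaller residual) then reduces the task to bounding the right-hand side. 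Assuming for the moment that $\mtx{\Sigma}_1$ is invertible, I would further rewrite $\mtx{Z}_0$ as $\widetilde{\mtx{Z}}_0\mtx{\Sigma}_1$ with $\widetilde{\mtx{Z}}_0=[\mathbf{I};\,\mtx{F}]^\top$ and $\mtx{F}=\mtx{\Sigma}_2\mtx{\Omega}_2\mtx{\Omega}_1^{\dagger}\mtx{\Sigma}_1^{-1}$, which leaves the range unchanged.

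With this normalisation, an explicit block expansion of $\mtx{P}_{\widetilde{\mtx{Z}}_0}=\widetilde{\mtx{Z}}_0(\mathbf{I}+\mtx{F}^*\mtx{F})^{-1}\widetilde{\mtx{Z}}_0^*$ applied to the block-diagonal $\mtx{\Sigma}$ yields four blocks whose HS norms can be summed. Using the standard operator identity $(\mathbf{I}+\mtx{F}^*\mtx{F})^{-1}\preceq \mathbf{I}$ on the top block and $\mtx{F}(\mathbf{I}+\mtx{F}^*\mtx{F})^{-1}\mtx{F}^*\preceq \mtx{F}\mtx{F}^*\wedge \mathbf{I}$ on the bottom block, the four-term sum collapses to
\[
\|(\mathbf{I}-\mtx{P}_{\widetilde{\mtx{Z}}_0})\mtx{\Sigma}\|_{\HS}^2\;\leq\;\|\mtx{\Sigma}_2\|_{\HS}^2+\|\mtx{\Sigma}_2\mtx{\Omega}_2\mtx{\Omega}_1^{\dagger}\|_{\HS}^2,
\]
which is exactly the claim. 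The case where $\mtx{\Sigma}_1$ has vanishing singular values is handled by replacing $\mtx{\Sigma}_1$ with $\mtx{\Sigma}_1+\varepsilon\mathbf{I}$ and letting $\varepsilon\to 0^+$; both sides of the inequality are continuous in $\varepsilon$.

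I expect the main obstacle to be the infinite-dimensional bookkeeping: $\mtx{\Sigma}$ and $\mtx{\Omega}_2$ have infinitely many rows, so the block-matrix manipulations must be justified. The cleanest way is to truncate $\F$ to its rank-$N$ partial SVD $\F_N=\sum_{j=1}^{N}\sigma_j u_j\otimes v_j$, run the finite argument on the $N\times N$ block of $\mtx{\Sigma}$ (with the tail absorbed into $\mtx{\Sigma}_2$), and let $N\to\infty$, using $\F_N\to\F$ in HS norm together with the continuity of the maps $\mtx{\Omega}\mapsto \mtx{P}_{\F_N\mtx{\Omega}}\F_N$ in HS norm along monotone sequences of projections.
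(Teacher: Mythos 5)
Your proof follows essentially the same route as the paper: both reduce via orthogonal invariance to the diagonal setting $\|(\mathbf{I}-\mtx{P}_{\mtx{U}^*\mtx{Y}})\mtx{U}^*\F\mtx{V}\|_{\HS}$ and then run the finite-dimensional argument of Halko--Martinsson--Tropp \cite[Thm.~9.1]{halko2011finding}. Where the paper simply cites that theorem for the ``bounded infinite matrix'' $\mtx{A}=\mtx{U}^*\F\mtx{V}$, you unpack its steps (projector shrinking via $\mtx{\Omega}_1^{\dagger}$, block expansion, $\varepsilon$-regularization of $\mtx{\Sigma}_1$) and add a truncation-and-limit argument to justify the infinite-dimensional bookkeeping, which the paper leaves implicit.
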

\begin{proof}
First, note that because $\mtx{U}\mtx{U}^*$ is the orthonormal projection onto the range of $\F$ and $\mtx{U}$ is a basis for the range, we have
\[
\|\F-\mtx{P}_\mtx{Y}\F\|_{\HS} = \|\mtx{U}\mtx{U}^*\F-\mtx{P}_{\mtx{Y}}\mtx{U}\mtx{U}^*\F\|_{\HS}.
\]
By Parseval's theorem~\cite[Thm.~4.18]{rudin1987real}, we have
\[ 
\|\mtx{U}\mtx{U}^* \F-\mtx{P}_{\mtx{Y}}\mtx{U}\mtx{U}^* \F\|_{\HS}= \|\mtx{U}^*\mtx{U}\mtx{U}^* \F-\mtx{U}^*\mtx{P}_{\mtx{Y}}\mtx{U}\mtx{U}^* \F \mtx{V}\|_{\HS}.
\]
Moreover, we have the equality $\|\F-\mtx{P}_{\mtx{Y}}\F\|_{\HS}=\|(\mtx{I}-\mtx{P}_{\mtx{U}^*\mtx{Y}})\mtx{U}^*\F\mtx{V}\|_{\HS}$ 
because the inner product $\langle\sum_{j=1}^\infty \alpha_j u_j,\sum_{j=1}^\infty \beta u_j\rangle=0$ if and only if $\sum_{j=1}^\infty \alpha_j \beta_j=0$. We now take $\mtx{A}=\mtx{U}^*\F\mtx{V}$, which is a bounded infinite matrix such that $\|\mtx{A}\|_{\textup{F}}=\|\F\|_{\HS}<\infty$. The statement of the theorem immediately follows from the proof of~\cite[Thm.~9.1]{halko2011finding}.
\end{proof}

This theorem shows that the bound on the approximation error $\|\F-\mtx{P}_\mtx{Y}\F\|_{\HS}$ depends on the singular values of the HS operator and the test matrix $\mtx{\Omega}$.

\subsection{Probability distribution of $\mtx{\Omega}_1$} \label{sec_proba_omega_1}

If the columns of $\mtx{\Omega}$ are independent and identically distributed as $\mathcal{GP}(0,K)$, then the matrix $\mtx{\Omega}_1$ in~\cref{th_tropp_deter_svd} is of size $k\times \ell$ with entries that follow a Gaussian distribution.  To see this, note that 
\[
\mtx{\Omega}_1 = \mtx{V}_1^*\mtx{\Omega} = 
\left(
\begin{array}{ccc}
\langle v_1,\omega_1\rangle & \cdots & \langle v_1, \omega_{\ell}\rangle\\
\vdots & \ddots & \vdots \\
\langle v_k,\omega_1\rangle & \cdots & \langle v_k,\omega_{\ell}\rangle
\end{array}
\right), \qquad \omega_j\sim\mathcal{GP}(0,K). 
\]
If $\omega \sim \mathcal{GP}(0,K)$ with $K$ given in \cref{eq_covariance_kernel}, then we find that $\langle v,\omega\rangle \sim \mathcal{N}(0,\sum_{j=1}^\infty \lambda_j \langle v,\psi_j\rangle^2)$ so we conclude that $\mtx{\Omega}_1$ has Gaussian entries with zero mean. Finding the covariances between the entries is more involved. 
\begin{lemma} \label{prop_cov_matrix}
With the same setup as~\cref{th_tropp_deter_svd}, suppose that the columns of $\mtx{\Omega}$ are independent and identically distributed as $\mathcal{GP}(0,K)$. Then, the matrix $\mtx{\Omega}_1=\mtx{V}_1^*\mtx{\Omega}$ in~\cref{th_tropp_deter_svd} has independent columns and each column is identically distributed as a multivariate Gaussian with positive definite covariance matrix $\mtx{C}$ given by
\begin{equation} \label{eq_def_sigma_k}
\mtx{C}_{ij} = \int_{D_1\times D_1} v_i(x)K(x,y)v_{j}(y)\d x\d y, \qquad 1\leq i,j\leq k,
\end{equation}
where $v_i$ is the $i$th column of $\mtx{V}_1$.
\end{lemma}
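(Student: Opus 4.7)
The plan is to establish three things in turn: that $\mtx{\Omega}_1$ has jointly Gaussian entries with mean zero, that its columns are independent, and that the covariance of each column is the matrix $\mtx{C}$ which is positive definite. The argument is essentially a direct computation using the Karhunen--Lo\`eve expansion of the kernel $K$ from \cref{eq_covariance_kernel}, together with the orthonormality of the singular functions $v_1,\ldots,v_k$.

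First I would fix a single column $\omega \sim \mathcal{GP}(0,K)$ of $\mtx{\Omega}$ and write $\omega = \sum_{j=1}^{\infty} \sqrt{\lambda_j}\, c_j\, \psi_j$, where $\{c_j\}$ are i.i.d.\ standard Gaussians, as noted in \cref{sec_GP}. Then for each $1 \leq i \leq k$,
\[
\langle v_i, \omega \rangle \;=\; \sum_{j=1}^{\infty} \sqrt{\lambda_j}\, \langle v_i, \psi_j\rangle\, c_j,
\]
which is an $L^2$-convergent sum of independent Gaussians and hence Gaussian with mean zero. Since the $k$-vector $(\langle v_1,\omega\rangle,\ldots,\langle v_k,\omega\rangle)^\top$ is obtained by a bounded linear map applied to the Gaussian sequence $\{c_j\}$, it is jointly multivariate Gaussian with mean zero. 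Independence of the columns of $\mtx{\Omega}_1$ then follows immediately from the independence of the columns of $\mtx{\Omega}$, since each column of $\mtx{\Omega}_1$ is a deterministic linear functional of the corresponding column of $\mtx{\Omega}$.

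Next I would compute the covariance. Using the expansion above,
\[
\mathbb{E}\!\left[\langle v_i,\omega\rangle \langle v_j,\omega\rangle\right] \;=\; \sum_{m,n=1}^{\infty} \sqrt{\lambda_m \lambda_n}\, \langle v_i,\psi_m\rangle \langle v_j,\psi_n\rangle\, \mathbb{E}[c_m c_n] \;=\; \sum_{m=1}^{\infty} \lambda_m \langle v_i,\psi_m\rangle \langle v_j,\psi_m\rangle.
\]
Substituting Mercer's expansion \cref{eq_covariance_kernel} for $K(x,y)$ into the double integral defining $\mtx{C}_{ij}$ and interchanging sum and integral (justified by absolute and uniform convergence of the Mercer series) gives exactly the same quantity, so $\mathbb{E}[\langle v_i,\omega\rangle\langle v_j,\omega\rangle] = \mtx{C}_{ij}$. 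Interchange of sum and integral is the only analytic subtlety and is the step I expect to need the most care, but Mercer's theorem delivers it directly.

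Finally, to see that $\mtx{C}$ is positive definite, I would pick any nonzero $\mathbf{a} = (a_1,\ldots,a_k)^\top \in \mathbb{R}^k$ and set $w = \sum_{i=1}^k a_i v_i \in L^2(D_1)$. Then
\[
\mathbf{a}^\top \mtx{C}\, \mathbf{a} \;=\; \int_{D_1 \times D_1} w(x)\, K(x,y)\, w(y)\, \d x \d y,
\]
which is strictly positive because $K$ is a positive definite kernel and $w \not\equiv 0$ (since $v_1,\ldots,v_k$ are orthonormal and $\mathbf{a} \neq 0$ forces $\|w\|_{L^2(D_1)}^2 = \|\mathbf{a}\|_2^2 > 0$). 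Combining the three facts yields the claimed distribution of $\mtx{\Omega}_1$ and completes the proof.
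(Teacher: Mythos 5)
Your proof is correct and follows essentially the same route as the paper: expand $\omega$ in the Karhunen--Lo\`eve basis, compute $\E[\langle v_i,\omega\rangle\langle v_j,\omega\rangle]$ term by term, and identify the result with the Mercer series of $K$. The paper's proof merely justifies the interchange of expectation and the infinite double sum more explicitly (via partial sums $X_{ij}^{m_1}$ and a Cauchy--Schwarz estimate) and phrases positive definiteness through the variance of $\langle\sum_i a_i v_i,\omega\rangle$ rather than the integral quadratic form, but these are the same computation in different notation.
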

\begin{proof}
We already know that the entries are Gaussian with mean $0$. Moreover, the columns are independent because $\omega_1,\ldots,\omega_\ell$ are independent. Therefore, we focus on the covariance matrix. Let $1\leq i,i'\leq k$, $1\leq j,j'\leq \ell$, then since $\mathbb{E}\!\left[\langle v_i,\omega_j\rangle\right] = 0$ we have
\[
\cov(\langle v_i,\omega_j\rangle, \langle v_{i'},\omega_{j'}\rangle) 
= \E\left[ \langle v_i,\omega_j\rangle\, \langle v_{i'},\omega_{j'}\rangle \right] = \mathbb{E}\left[X_{ij}X_{i'j'}\right],
\]
where $X_{ij} = \langle v_i,\omega_j\rangle$. Since $\langle v_i,\omega_j\rangle\sim\sum_{n=1}^\infty \sqrt{\lambda_n}c_n^{(j)} \langle v_i,\psi_n\rangle$, where $c_n^{(j)}\sim\mathcal{N}(0,1)$, we have
\[
\cov(\langle v_i,\omega_j\rangle, \langle v_{i'},\omega_{j'}\rangle) = \E\left[\lim_{m_1,m_2\to\infty}X_{ij}^{m_1} X_{i'j'}^{m_2}\right],\qquad X_{ij}^{m_1}\coloneq\sum_{n=1}^{m_1} \sqrt{\lambda_n}c_n^{(j)}  \langle v_i,\psi_n\rangle.
\]
We first show that $\lim_{m_1,m_2\to\infty}\left|\E\!\left[\!X_{ij}^{m_1}X_{i'j'}^{m_2}\right]-\E\!\left[X_{ij}X_{i'j'}\right]\right|=0$. For any $m_1,m_2\geq 1$, we have by the triangle inequality,
\begin{align*}
\left|\E\!\left[\!X_{ij}^{m_1}X_{i'j'}^{m_2}\!\right]-\E\!\left[X_{ij}X_{i'j'}\right]\right| \!\!&\leq \E\!\left[\left|X_{ij}^{m_1}X_{i'j'}^{m_2}-X_{ij}X_{i'j'}\right|\right]\\
&\leq \E\!\left[\left|(X_{ij}^{m_1}-X_{ij})X_{i'j'}^{m_2}\right|\right] \!\!+\E\!\left[\left|X_{ij}(X_{i'j'}^{m_2}-X_{i'j'})\right|\right]\\
&\!\!\!\!\!\!\!\!\!\!\!\!\!\!\!\!\!\!\!\!\!\!\!\!\!\!\!\!\!\!\!\!\!\!\!\!
\leq 
\E\!\left[\left|X_{ij}^{m_1}-X_{ij}\right|^2\right]^{\tfrac{1}{2}}\!\E\!\left[\left|X_{i'j'}^{m_2}\right|^2\right]^{\tfrac{1}{2}}\!\!
+
\E\!\left[\left|X_{i'j'}-X_{i'j'}^{m_2}\right|^2\right]^{\tfrac{1}{2}}\!\E\!\left[\left|X_{ij}\right|^2\right]^{\tfrac{1}{2}}\!,
\end{align*}
where the last inequality follows from the Cauchy--Schwarz inequality. We now set out to show that both terms in the last inequality converge to zero as $m_1,m_2\to\infty$. The terms $\smash{\E[|X_{i'j'}^{m_2}|^2]}$ and $\smash{\E[|X_{ij}|^2]}$ are bounded by $\sum_{n=1}^\infty \lambda_n<\infty$, using the Cauchy--Schwarz inequality. Moreover, we have
\[
\E\left[\left|X_{ij}^{m_1}-X_{ij}\right|^2\right] = \E\left[\left|\sum_{n=m_1+1}^\infty \sqrt{\lambda_n}c_n^{(j)} \langle v_i,\psi_n\rangle\right|^2\right]\leq \sum_{n=m_1+1}^\infty \lambda_n \xrightarrow[m_1 \to \infty]{} 0,
\]
because $X_{ij} - X_{ij}^{m_1}\sim \mathcal{N}(0,\sum_{n=m_1+1}^\infty\lambda_n\langle v_i,\psi_n\rangle^2)$. Therefore, we find that $\cov(X_{ij}, X_{i'j'}) = \lim_{m_1,m_2\to\infty}\E[X_{ij}^{m_1}X_{i'j'}^{m_2}]$ and we obtain
\begin{align*}
\cov(X_{ij},X_{i'j'})
& =\lim_{m_1,m_2\to\infty}\E\left[\sum_{n=1}^{m_1}\sum_{n'=1}^{m_2} \sqrt{\lambda_n \lambda_{n'}}c_n^{(j)}c_{n'}^{(j')} \langle v_i,\psi_n\rangle\langle v_{i'},\psi_{n'}\rangle\right] \\
&=\lim_{m_1,m_2\to\infty}\sum_{n=1}^{m_1}\sum_{n'=1}^{m_2} \sqrt{\lambda_n \lambda_{n'}}\E[c_n^{(j)}c_{n'}^{(j')}] \langle v_i,\psi_n\rangle\langle v_{i'},\psi_{n'}\rangle.
\end{align*}
The latter expression is zero if $n\neq n'$ or $j\neq j'$ because then $c_n^{(j)}$ and $c_{n'}^{(j')}$ are independent random variables with mean $0$. Since $\E[(c_n^{(j)})^2] = 1$, we have
\[
\cov(X_{ij},X_{i'j'})= 
\begin{cases}
\sum_{n=1}^\infty \lambda_n \langle v_i,\psi_n\rangle\langle v_{i'},\psi_{n}\rangle,  & j=j',\\
0, & \text{otherwise}.
\end{cases}
\]
The result follows as the infinite sum is equal to the integral in~\cref{eq_def_sigma_k}.  To see that $\mtx{C}$ is positive definite, let $a\in \R^k$, then $a^* \mtx{C} a=\E[Z_a^2]\geq 0$, where $Z_a\sim\mathcal{N}(0,\sum_{n=1}^\infty \lambda_n \langle a_1 v_1+\cdots+a_k v_k,\psi_n\rangle^2)$. Moreover, $a^* \mtx{C} a = 0$ implies that $a=0$ because $v_1,\ldots,v_k$ are orthonormal and $\{\psi_n\}$ is an orthonormal basis of $L^2(D_1)$. 
\end{proof}

\cref{prop_cov_matrix} gives the distribution of the matrix $\mtx{\Omega}_1$, which is essential to prove \cref{th_tropp_random_svd_Frob} in \cref{sec_proof_thm_1}. In particular, $\mtx{\Omega}_1$ has independent columns that are each distributed as a multivariate Gaussian with covariance matrix given in \cref{eq_def_sigma_k}.

\subsection{Quality of the covariance kernel} \label{sec_quality_kernel}
To investigate the quality of the kernel, we introduce the Wishart distribution, which is a family of probability distributions over symmetric and nonnegative-definite matrices that often appear in the context of covariance matrices~\cite{wishart1928generalised}. If $\mtx{\Omega}_1$ is a $k\times \ell$ random matrix with independent columns, where each column is a multivariate Gaussian distribution with mean $(0,\ldots,0)^\top$ and covariance $\mtx{C}$, then $\mtx{A}=\mtx{\Omega}_1 \mtx{\Omega}_1^*$ has a Wishart distribution~\cite{wishart1928generalised}. We write $\mtx{A}\sim W_k(\ell,\mtx{C})$. We note that $\|\mtx{\Omega}_1^{\dagger}\|_{\textup{F}}^2=\Tr[(\mtx{\Omega}_1^{\dagger})^*\mtx{\Omega}_1^{\dagger}]=\Tr(\mtx{A}^{-1})$, where the second equality holds with probability one because the matrix $\mtx{A}=\mtx{\Omega}_1\mtx{\Omega}_1^*$ is invertible with probability one (see~\cite[Thm.~3.1.4]{muirhead2009aspects}). By~\cite[Thm.~3.2.12]{muirhead2009aspects} for $\ell-k\geq 2$, we have $\E[\mtx{A}^{-1}]=\frac{1}{\ell-k-1}\mtx{C}^{-1}$, $\E[\Tr(\mtx{A}^{-1})]=\Tr(\mtx{C}^{-1})/(\ell-k-1)$, and conclude that 
\begin{equation} \label{prop_rand_2}
\mathbb{E}\left[\|\mtx{\Omega}_1^{\dagger}\|_{\textup{F}}^2\right]=\frac{1}{\gamma_k\lambda_1}\frac{k}{\ell-k-1},\qquad \gamma_k \coloneq \frac{k}{\lambda_1 \Tr(\mtx{C}^{-1})}. 
\end{equation} 

The quantity $\gamma_k$ can be viewed as measuring the quality of the covariance kernel $K$ for learning the HS operator $\F$ (see~\cref{th_tropp_random_svd_Frob}). First, $1\leq \gamma_k<\infty$ as $\mtx{C}$ is symmetric positive definite. Moreover, for $1\leq j\leq k$, the $j$th largest eigenvalue of $\mtx{C}$ is bounded by the $j$th largest eigenvalue of $K$ as $\mtx{C}$ is a principal submatrix of $\mtx{V}^*K\mtx{V}$~\cite[Sec.~III.5]{kato2013perturbation}. Therefore, the following inequality holds,
\begin{equation} \label{eq_lower_bound_gamma}
\frac{1}{k}\sum_{j=1}^k\frac{\lambda_1}{\lambda_j}\leq \frac{1}{\gamma_k}<\infty,
\end{equation}
and the harmonic mean of the first $k$ scaled eigenvalues of $K$ is a lower bound for $1/\gamma_k$. In the ideal situation, the eigenfunctions of $K$ are the right singular functions of $\F$, i.e., $\psi_n=v_n$, $\mtx{C}$ is a diagonal matrix with entries $\lambda_1,\ldots,\lambda_k$, and $\gamma_k=k/(\sum_{j=1}^k \lambda_1/\lambda_j)$ is as small as possible.

We now provide a useful upper bound on $\gamma_k$ in a more general setting.

\begin{lemma} \label{lem_bound_sigma}
Let $\mtx{V}_1$ be a $D_1\times k$ quasimatrix with orthonormal columns and assume that there exists $m\in\mathbb{N}$ such that the columns of $\mtx{V}_1$ are spanned by the first $k+m$ eigenvectors of the continuous positive definite kernel $K:D_1\times D_1\to \R$. Then 
\[
\frac{1}{\gamma_k} \leq \frac{1}{k}\sum_{j=m+1}^{k+m}\frac{\lambda_1}{\lambda_j},
\]
where $\lambda_1\geq \lambda_2\geq\cdots> 0$ are the eigenvalues of $K$. This bound is tight in the sense that the inequality can be attained as an equality. 
\end{lemma}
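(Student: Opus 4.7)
The plan is to rewrite $\mtx{C}$ explicitly in the eigenbasis of $K$, and then bound its eigenvalues from below using a Cauchy interlacing argument for isometric compressions. From there the inequality on $\Tr(\mtx{C}^{-1})$ follows immediately, and hence the bound on $1/\gamma_k$ via \cref{prop_rand_2}.

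\textbf{Step 1 (expand $\mtx{C}$ in the eigenbasis).} By hypothesis each column $v_i$ of $\mtx{V}_1$ lies in $\mathrm{span}\{\psi_1,\ldots,\psi_{k+m}\}$, so we can write $v_i=\sum_{n=1}^{k+m}\mtx{A}_{in}\psi_n$ for a $k\times(k+m)$ real matrix $\mtx{A}$. Orthonormality of the $v_i$ combined with orthonormality of the $\psi_n$ gives $\mtx{A}\mtx{A}^{*}=\mtx{I}_k$. Using $\mathcal{K}\psi_n=\lambda_n\psi_n$ and the definition \cref{eq_def_sigma_k}, a direct computation yields
\[
\mtx{C} = \mtx{A}\mtx{\Lambda}\mtx{A}^{*},\qquad \mtx{\Lambda}=\diag(\lambda_1,\ldots,\lambda_{k+m}).
\]

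\textbf{Step 2 (Cauchy interlacing).} Since $\mtx{A}^{*}$ is a $(k+m)\times k$ matrix with orthonormal columns, $\mtx{C}$ is the compression of the diagonal matrix $\mtx{\Lambda}$ to the $k$-dimensional subspace $\mathrm{range}(\mtx{A}^{*})$. The generalized Cauchy interlacing theorem for isometric embeddings (see e.g.~\cite[Sec.~III.5]{kato2013perturbation}) then gives, for $i=1,\ldots,k$,
\[
\lambda_i(\mtx{C}) \ \geq\ \lambda_{i+m}.
\]
In particular $\mtx{C}$ is invertible (this is already known from \cref{prop_cov_matrix}).

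\textbf{Step 3 (conclude).} Taking reciprocals and summing,
\[
\Tr(\mtx{C}^{-1}) \ =\ \sum_{i=1}^{k} \frac{1}{\lambda_i(\mtx{C})} \ \leq\ \sum_{i=1}^{k} \frac{1}{\lambda_{i+m}} \ =\ \sum_{j=m+1}^{k+m} \frac{1}{\lambda_j}.
\]
Multiplying by $\lambda_1/k$ and invoking the definition $1/\gamma_k=\lambda_1\Tr(\mtx{C}^{-1})/k$ from \cref{prop_rand_2} yields the stated bound.

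\textbf{Step 4 (sharpness).} To see that the inequality is attained, choose $v_i=\psi_{m+i}$ for $i=1,\ldots,k$. Then $\mtx{A}$ has a single $1$ in column $m+i$ of row $i$ and zeros elsewhere, so $\mtx{C}=\diag(\lambda_{m+1},\ldots,\lambda_{m+k})$ and every inequality above becomes an equality. The only nontrivial ingredient is the generalized Cauchy interlacing inequality for compressions to arbitrary subspaces; everything else is a direct computation, so I expect no real obstacle beyond citing the correct form of interlacing.
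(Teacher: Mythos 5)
Your proof is correct and takes essentially the same approach as the paper: both realize $\mtx{C}$ as a compression of the eigenvalue diagonal $\diag(\lambda_1,\ldots,\lambda_{k+m})$ to a $k$-dimensional subspace (the paper phrases this as a principal submatrix of $\mtx{Q}^*K\mtx{Q}$ after extending $\mtx{V}_1$ to an orthonormal basis of $\mathrm{span}(\psi_1,\ldots,\psi_{k+m})$), then apply Cauchy interlacing to obtain $\lambda_i(\mtx{C})\geq\lambda_{i+m}$ and sum reciprocals. Your Step 4 explicitly verifies the sharpness claim, which the paper merely asserts.
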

\begin{proof}
Let $\mtx{Q} = \left[v_1\,|\,\cdots\,|\,v_k\,|\,q_{k+1}\,|\cdots\,|\,q_{k+m}\right]$ be a quasimatrix with orthonormal columns whose columns form an orthonormal basis for ${\rm Span}(\psi_1,\ldots,\psi_{k+m})$.  Then, $\mtx{Q}$ is an invariant space of $K$ and $\mtx{C}$ is a principal submatrix of $\mtx{Q}^*K\mtx{Q}$, which has eigenvalues $\lambda_1\geq \cdots\geq \lambda_{k+m}$. By~\cite[Thm.~6.46]{kato2013perturbation} the $k$ eigenvalues of $\mtx{C}$, denoted by $\mu_1,\ldots,\mu_{k}$, are greater than the first $k+m$ eigenvalues of $K$: $\mu_{j}\geq \lambda_{m+j}$ for $1\leq j\leq k$, and the result follows as the trace of a matrix is the sum of its eigenvalues. 
\end{proof}

\subsection{Probabilistic error bounds}
As discussed in~\cref{sec:TwoWarnings}, we need to extend the probability bounds of the randomized SVD to allow for non-standard Gaussian random vectors. The following lemma is a generalization of~\cite[Thm.~A.7]{halko2011finding}.
\begin{lemma} \label{thm_propa_bound}
Let $k,\ell\geq 1$ such that $\ell-k\geq 4$ and $\mtx{\Omega}_1$ be a $k\times \ell$ random matrix with independent columns such that each column has mean $(0,\ldots,0)^\top$ and positive definite covariance $\mtx{C}$. For all $t\geq 1$, we have
\[\mathbb{P}\left\{\|\mtx{\Omega}_1^\dagger\|_{\textup{F}}^2>\frac{3\Tr(\mtx{C}^{-1})}{\ell-k+1}\cdot t^2\right\}\leq t^{-(\ell-k)}.\]
\end{lemma}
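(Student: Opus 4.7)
The plan is to reduce the problem to the case of a standard Gaussian matrix via a whitening transformation, extract the joint distribution through a diagonalization, and then combine Minkowski's inequality with an explicit moment computation for an inverse chi-squared variable, concluding with Markov's inequality applied to the $p$-th moment where $p=\ell-k$. Since (by the context of \cref{prop_cov_matrix}) the columns of $\mtx{\Omega}_1$ are Gaussian with covariance $\mtx{C}$, we may write $\mtx{\Omega}_1=\mtx{C}^{1/2}\tilde{\mtx{\Omega}}_1$, where $\tilde{\mtx{\Omega}}_1$ is a $k\times\ell$ matrix with i.i.d.\ standard Gaussian entries. Setting $\tilde{\mtx{A}}=\tilde{\mtx{\Omega}}_1\tilde{\mtx{\Omega}}_1^*\sim W_k(\ell,I_k)$ and using the cyclic property of the trace,
\[
\|\mtx{\Omega}_1^\dagger\|_{\textup{F}}^2=\Tr\!\bigl[(\mtx{\Omega}_1\mtx{\Omega}_1^*)^{-1}\bigr]=\Tr\!\bigl[\mtx{C}^{-1}\tilde{\mtx{A}}^{-1}\bigr].
\]

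Next, I would diagonalize $\mtx{C}^{-1}=\mathbf{U}\diag(\mu_1,\dots,\mu_k)\mathbf{U}^*$ with $\mu_j>0$. Since the distribution of $\tilde{\mtx{\Omega}}_1$ is invariant under left multiplication by an orthogonal matrix, the rotated Wishart matrix $\tilde{\mtx{B}}\coloneq\mathbf{U}^*\tilde{\mtx{A}}\mathbf{U}$ is again $W_k(\ell,I_k)$-distributed, and another application of the trace identity gives
\[
\|\mtx{\Omega}_1^\dagger\|_{\textup{F}}^2\stackrel{d}{=}\sum_{j=1}^k\mu_j\,(\tilde{\mtx{B}}^{-1})_{jj}.
\]
A standard Schur-complement computation in multivariate statistics (e.g.~\cite[Thm.~3.2.12]{muirhead2009aspects}) shows that each diagonal entry $(\tilde{\mtx{B}}^{-1})_{jj}$ is marginally distributed as $1/\chi^2_{\ell-k+1}$, even though the $k$ entries are dependent across $j$. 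Setting $p=\ell-k\geq 4$ so that $p/2\geq 2$, Minkowski's inequality applied to the nonnegative linear combination above then yields
\[
\mathbb{E}\!\bigl[\|\mtx{\Omega}_1^\dagger\|_{\textup{F}}^{p}\bigr]^{2/p}\leq \Bigl(\sum_{j=1}^k\mu_j\Bigr)\mathbb{E}[Y^{-p/2}]^{2/p}=\Tr(\mtx{C}^{-1})\,\mathbb{E}[Y^{-p/2}]^{2/p},\qquad Y\sim\chi^2_{p+1}.
\]

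The moment $\mathbb{E}[Y^{-p/2}]=\sqrt{\pi}/(2^{p/2}\Gamma((p+1)/2))$ follows by elementary integration against the $\chi^2_{p+1}$ density, and the key remaining step is to verify that $\mathbb{E}[Y^{-p/2}]^{2/p}\leq 3/(p+1)$ for every integer $p\geq 4$, which I would do by direct evaluation of the Gamma function at $p\in\{4,5,6\}$ combined with a Stirling estimate for large $p$. Given this, Markov's inequality applied to $\|\mtx{\Omega}_1^\dagger\|_{\textup{F}}^p$ gives, for any $t\geq 1$,
\[
\mathbb{P}\!\left\{\|\mtx{\Omega}_1^\dagger\|_{\textup{F}}^2>\frac{3\Tr(\mtx{C}^{-1})}{\ell-k+1}\,t^2\right\}\leq \frac{\mathbb{E}[\|\mtx{\Omega}_1^\dagger\|_{\textup{F}}^p]}{\bigl(3\Tr(\mtx{C}^{-1})/(\ell-k+1)\bigr)^{p/2}\,t^p}\leq t^{-p},
\]
which is precisely the claimed probability bound. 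The principal obstacle is the Gamma-function inequality in the previous step: this is where both the constant $3$ and the hypothesis $\ell-k\geq 4$ originate, and it is the direct analogue of the calculation underlying the classical randomized SVD probability bound \cite[Thm.~A.7]{halko2011finding} that we are generalizing to arbitrary positive definite covariance $\mtx{C}$.
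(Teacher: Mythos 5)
Your proposal is essentially the paper's own argument: both reduce $\|\mtx{\Omega}_1^\dagger\|_{\textup{F}}^2$ to the trace of an inverse Wishart matrix, invoke the Muirhead result \cite[Thm.~3.2.12]{muirhead2009aspects} that diagonal entries of an inverse Wishart are scaled inverse chi-squared with $\ell-k+1$ degrees of freedom, apply Minkowski's inequality to the resulting sum, and close with the inverse chi-squared moment bound at exponent $q=(\ell-k)/2$ followed by Markov. The only differences are presentational: you whiten and rotate to reduce to the $W_k(\ell,I_k)$ case and recover $\Tr(\mtx{C}^{-1})$ as $\sum_j\mu_j$, whereas the paper works with $W_k(\ell,\mtx{C})$ directly and writes $\Tr(\mtx{C}^{-1})=\sum_j(\mtx{C}^{-1})_{jj}$; and you propose to re-derive the Gamma-function inequality $\mathbb{E}[Y^{-p/2}]^{2/p}\leq 3/(p+1)$ rather than cite \cite[Lem.~A.9]{halko2011finding} as the paper does (your plan of a few explicit evaluations plus Stirling is plausible but as stated leaves a small gap for the intermediate $p$; the cleanest route is to check that $(p+1)\mathbb{E}[Y^{-p/2}]^{2/p}$ is decreasing in $p$, so the case $p=4$ suffices).
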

\begin{proof}
Since $\mtx{\Omega}_1\mtx{\Omega}_1^*\sim W_{k}(\ell,\mtx{C})$, the reciprocals of its diagonal elements follow a scaled chi-square distribution~\cite[Thm.~3.2.12]{muirhead2009aspects}, i.e.,
\[
\frac{\left((\mtx{\Omega}_1\mtx{\Omega}_1^*)^{-1}\right)_{jj}}{\left(\mtx{C}^{-1}\right)_{jj}} \sim X_j^{-1},\qquad X_j\sim \chi_{\ell-k+1}^2,\qquad 1\leq j\leq k.
\]
Let $Z=\|\mtx{\Omega}_1^\dagger\|_{\textup{F}}^2=\Tr[(\mtx{\Omega}_1\mtx{\Omega}_1^*)^{-1}]$ and $q=(\ell-k)/2$. Following the proof of~\cite[Thm.~A.7]{halko2011finding}, we have the inequality
\[
\mathbb{P}\left\{|Z|\geq \frac{3\Tr(\mtx{C}^{-1})}{\ell-k+1}\cdot t^2\right\}\leq \left[\frac{3\Tr(\mtx{C}^{-1})}{\ell-k+1}\cdot t^2\right]^{-q}\E\left[|Z|^q\right],\quad t\geq 1.
\]
Moreover, by the Minkowski inequality, we have
\[
\left(\E\left[|Z^q|\right]\right)^{1/q}=\left(\E\left[\left|\sum_{j=1}^k[\mtx{C}^{-1}]_{jj}X_j^{-1}\right|^q\right]\right)^{1/q}\!\!\leq 
\sum_{j=1}^k [\mtx{C}^{-1}]_{jj}\E\left[|X_j^{-1}|^q\right]^{1/q}\leq \frac{3\Tr(\mtx{C}^{-1})}{\ell-k+1},
\]
where the last inequality is from~\cite[Lem.~A.9]{halko2011finding}. The result follows from the argument in the proof of~\cite[Thm.~A.7]{halko2011finding}.
\end{proof}

Under the assumption of \cref{lem_bound_sigma}, we find that~\cref{thm_propa_bound} gives the following bound: 
\[
\mathbb{P}\left\{\|\mtx{\Omega}_1^\dagger\|_{\textup{F}}>t\cdot \sqrt{\frac{3}{\ell-k+1}\sum_{j=m+1}^{k+m}\lambda_j^{-1}}\right\}\leq t^{-(\ell-k)}.
\]
In particular, in the finite dimensional case when $\lambda_1=\cdots=\lambda_n=1$, we recover the probabilistic bound found in~\cite[Thm.~A.7]{halko2011finding}.

To obtain the probability statement found in \cref{eq_proba_Green_bound} we require control of the tail of the distribution of a Gaussian quasimatrix with non-standard covariance kernel (see~\cref{sec_proof_thm_1}). In the theory of the randomized SVD, one relies on the concentration of measure results~\cite[Prop.~10.3]{halko2011finding}. However, we need to employ a different strategy and instead directly bound the HS norm of $\mtx{\Omega}_2$. One difficulty is that the norm of this matrix must be controlled for large dimensions $n$, which leads to a weaker probability bound than~\cite{halko2011finding}. While it is possible to apply Markov's inequality to obtain deviation bounds, we highlight that \cref{prop_control_matrices} provides a Chernoff-type bound, i.e., exponential decay of the tail distribution of $\|\mtx{\Omega}_2\|_{\HS}$, which is crucial to approximate Green's functions (see \cref{sec_proba_fail_Green}).

\begin{lemma} \label{prop_control_matrices}
With the same notation as in~\cref{th_tropp_deter_svd}, let $\ell\geq k \geq 1$. For all $s\geq 1$ we have
\[\mathbb{P}\left\{\|\mtx{\Omega}_2\|_{\HS}^2>\ell s^2\Tr(K)\right\}\leq  \left[s e^{-(s^2-1)/2}\right]^{\ell}.\]
\end{lemma}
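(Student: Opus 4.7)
The plan is to reduce the tail bound for $\|\mtx{\Omega}_2\|_{\HS}^2$ to a tail bound for a sum of $\ell$ independent scaled chi-squared random variables, then invoke the classical chi-squared Chernoff inequality. The starting observation is Bessel's inequality: the columns of $\mtx{V}_2$ form an orthonormal subset of $L^2(D_1)$, so the $i$th column of $\mtx{\Omega}_2 = \mtx{V}_2^*\mtx{\Omega}$ is the sequence $(\langle v_{k+j},\omega_i\rangle)_{j\geq 1}$ whose squared $\ell^2$-norm is at most $\|\omega_i\|_{L^2(D_1)}^2$. Summing column by column yields
\[
\|\mtx{\Omega}_2\|_{\HS}^2 \;\leq\; \sum_{i=1}^\ell \|\omega_i\|_{L^2(D_1)}^2 \;=:\; Z.
\]

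Next, I would invoke the Karhunen--Lo\`eve expansion from~\cref{sec_GP}: write $\omega_i = \sum_{n\geq 1}\sqrt{\lambda_n}\,c_n^{(i)}\psi_n$ with i.i.d.\ standard Gaussian coefficients $c_n^{(i)}$, so that $\|\omega_i\|_{L^2(D_1)}^2 = \sum_n \lambda_n (c_n^{(i)})^2$ and, for $\theta \in (0, 1/(2\lambda_1))$,
\[
\mathbb{E}\bigl[e^{\theta\|\omega_i\|_{L^2(D_1)}^2}\bigr] = \prod_{n\geq 1}(1-2\theta\lambda_n)^{-1/2}.
\]

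The technical heart of the argument is to dominate this infinite product by the moment generating function of a single scaled chi-squared with total variance $\Tr(K)$. Since $f(x)=-\log(1-x)$ is convex on $[0,1)$ with $f(0)=0$, the chord bound applied on $[0, 2\theta\Tr(K)]$ combined with $\lambda_n \leq \Tr(K)$ gives, for any $\theta$ with $2\theta\Tr(K) < 1$ and each $n$,
\[
-\log(1-2\theta\lambda_n) \leq \frac{\lambda_n}{\Tr(K)}\bigl(-\log(1-2\theta\Tr(K))\bigr).
\]
Summing over $n$ and using $\sum_n\lambda_n = \Tr(K)$ collapses the product, giving $\mathbb{E}[e^{\theta\|\omega_i\|_{L^2(D_1)}^2}]\leq (1-2\theta\Tr(K))^{-1/2}$. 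By independence of the $\omega_i$, the MGF of $Z$ is then bounded by $(1-2\theta\Tr(K))^{-\ell/2}$, which is exactly the MGF of $\Tr(K)\cdot\chi^2_\ell$.

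Finally, I would apply the Chernoff bound with $a = \ell s^2\Tr(K)$,
\[
\mathbb{P}(Z > a) \leq e^{-\theta a}(1-2\theta\Tr(K))^{-\ell/2},
\]
and optimize by taking $2\theta\Tr(K) = 1 - 1/s^2$, which is admissible since $s \geq 1$. Direct substitution yields the claimed bound $[s\,e^{-(s^2-1)/2}]^\ell$, and combining with $\|\mtx{\Omega}_2\|_{\HS}^2 \leq Z$ completes the proof. The main obstacle is the convexity comparison step, which is what makes an infinite weighted chi-squared sum behave like a single $\chi^2_1$ scaled by $\Tr(K)$; once this reduction is in hand, the Chernoff optimization is routine and mirrors the standard chi-squared tail bound.
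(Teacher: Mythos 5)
Your proof is correct and takes essentially the same route as the paper: bound $\|\mtx{\Omega}_2\|_{\HS}^2$ by $\sum_i\|\omega_i\|_{L^2(D_1)}^2$, use the Karhunen--Lo\`eve expansion to write each term as a weighted chi-squared series, dominate the moment generating function by that of $\Tr(K)\cdot\chi^2_1$, and apply the Chernoff bound with the optimal choice $2\theta\Tr(K)=1-1/s^2$. The only cosmetic difference is how the product inequality $\prod_n(1-2\theta\lambda_n)^{-1/2}\leq(1-2\theta\Tr(K))^{-1/2}$ is justified: you use the chord bound for the convex function $-\log(1-x)$, whereas the paper invokes the Weierstrass-type inequality $\prod_n(1-a_n)\geq 1-\sum_n a_n$; both are valid and yield the identical bound.
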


\begin{proof}
We first remark that
\begin{equation} \label{eq_distrib_omega}
\|\mtx{\Omega}_2\|_{\HS}^2 \leq \|\mtx{\Omega}\|_{\HS}^2 = \sum_{j=1}^\ell Z_j,\qquad Z_j \coloneq \|\omega_j\|_{L^2(D_1)}^2,
\end{equation}
where the $Z_j$ are independent and identically distributed (i.i.d) because $\omega_j\sim \mathcal{GP}(0,K)$ are i.i.d. For $1\leq j\leq \ell$, we have (c.f.~\cref{sec_GP}),
\[\omega_j = \sum_{m=1}^\infty c_m^{(j)}\sqrt{\lambda_m}\psi_m,\]
where $c_m^{(j)}\sim \mathcal{N}(0,1)$ are i.i.d for $m\geq 1$ and $1\leq j\leq \ell$. First, since the series in \cref{eq_distrib_omega} converges absolutely, we have
\[Z_j = \sum_{m=1}^\infty (c_m^{(j)})^2\lambda_m= \lim_{N\to\infty}\sum_{m=1}^N X_m, \qquad X_m = (c_m^{(j)})^2\lambda_m,\]
where the $X_m$ are independent random variables and $X_m \sim \lambda_m\chi^2$ for $1\leq m\leq N$. Here, $\chi^2$ denotes the chi-squared distribution~\cite[Chapt.~4.3]{mood1950introduction}.

Let $N\geq 1$ and $0<\theta<1/(2\Tr(K))$, we can bound the moment generating function of $\sum_{m=1}^N X_m$ as
\begin{align*}
\E\left[e^{\theta \sum_{m=1}^N X_m}\right] &= \prod_{m=1}^N\E\left[e^{\theta X_m}\right] = \prod_{m=1}^N (1-2\theta\lambda_m)^{-1/2}\leq \left(1-2\theta \sum_{m=1}^N \lambda_m\right)^{-1/2}\\
&\leq \left(1-2\theta \Tr(K)\right)^{-1/2},
\end{align*}
because $X_m/\lambda_m$ are independent random variables that follow a chi-squared distribution. Using the monotone convergence theorem, we have
\[\E\left[e^{\theta Z_j}\right]\leq (1-2\theta \Tr(K))^{-1/2}.\]

Let $\tilde{s}\geq 0$ and $0<\theta<1/(2\Tr(K))$, by the Chernoff bound~\cite[Thm.~1]{chernoff1952measure}, we obtain
\begin{align*}
\mathbb{P} \left\{\|\mtx{\Omega}_2\|_{\HS}^2 > \ell (1+\tilde{s})\Tr(K)\right\} &\leq e^{-(1+\tilde{s})\Tr(K)\ell \theta}\E\left[e^{\theta Z_j}\right]^\ell \\
&=e^{-(1+\tilde{s})\Tr(K)\ell \theta}(1-2\theta \Tr(K))^{-\ell/2}.
\end{align*}
We can minimize this upper bound over $0<\theta<1/(2\Tr(K))$ by choosing $\theta = \tilde{s}/(2(1+\tilde{s})\Tr(K))$, which gives
\[\mathbb{P} \left\{\|\mtx{\Omega}_2\|_{\HS}^2 > \ell (1+\tilde{s})\Tr(K)\right\}\leq (1+\tilde{s})^{\ell/2} e^{-\ell \tilde{s}/2}.\]
Choosing $s=\sqrt{1+\tilde{s}}\geq 1$ concludes the proof.
\end{proof}

\cref{prop_control_matrices} can be refined further to take into account the interaction between the Hilbert--Schmidt operator $\F$ and the covariance kernel $K$ (see~\cite[Lem.~7]{boulle2021generalization}).

\subsection{Randomized SVD algorithm for HS operators}\label{sec_proof_thm_1}

We first prove an intermediary result, which generalizes \cite[Prop.~10.1]{halko2011finding} to HS operators. Note that one may obtain sharper bounds using a suitably chosen covariance kernels that yields a lower approximation error~\cite{boulle2021generalization}.

\begin{lemma} \label{prop_rand_1}
Let $\mtx{\Sigma}_2$, $\mtx{V}_2$, and $\mtx{\Omega}$ be defined as in \cref{th_tropp_deter_svd}, and $\mtx{T}$ be an $\ell\times k$ matrix, where $\ell\geq k\geq 1$. Then,
\[
\E\left[\|\mtx{\Sigma}_2 \mtx{V}_2^* \mtx{\Omega} \mtx{T}\|_{\HS}^2\right] \leq \lambda_1\|\mtx{\Sigma}_2\|_{\HS}^2\|\mtx{T}\|_{\textup{F}}^2,
\]
where $\lambda_1$ is the first eigenvalue of $K$.
\end{lemma}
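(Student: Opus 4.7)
Let $\mtx{X} := \mtx{V}_2^{*}\mtx{\Omega}$, which is an (infinite rows) $\times \ell$ random matrix whose $(i,m)$ entry is $X_{im} = \langle v_{k+i},\omega_m\rangle$ with $\omega_m \sim \mathcal{GP}(0,K)$ independent. The plan is to write the HS norm squared as a double sum, pass the expectation inside, and reduce everything to the second moments of the $X_{im}$, which can be computed exactly as in \cref{prop_cov_matrix}. Expanding,
\[
\|\mtx{\Sigma}_2\mtx{X}\mtx{T}\|_{\HS}^{2} \;=\; \sum_{i=1}^{\infty}\sum_{j=1}^{k}\sigma_{k+i}^{2}\Bigl|\sum_{m=1}^{\ell} X_{im}T_{mj}\Bigr|^{2},
\]
and since all terms are nonnegative I can use Tonelli's theorem to swap $\mathbb{E}$ with the sums.

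\textbf{Second moments.} Because the columns $\omega_1,\ldots,\omega_\ell$ of $\mtx{\Omega}$ are independent with mean zero, $\mathbb{E}[X_{im}X_{im'}] = 0$ whenever $m\neq m'$. For $m = m'$, the argument in the proof of \cref{prop_cov_matrix} (with $v_i$ there replaced by $v_{k+i}$) gives
\[
\mathbb{E}[X_{im}^{2}] \;=\; \int_{D_1\times D_1} v_{k+i}(x)K(x,y)v_{k+i}(y)\d x\d y \;=\; \sum_{n=1}^{\infty}\lambda_n\langle v_{k+i},\psi_n\rangle^{2}.
\]
Since $\lambda_1\geq \lambda_n$ for all $n$ and $\{\psi_n\}$ is an orthonormal basis of $L^2(D_1)$, Parseval yields $\mathbb{E}[X_{im}^{2}] \leq \lambda_1\|v_{k+i}\|_{L^2(D_1)}^{2} = \lambda_1$.

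\textbf{Assembly.} Using independence across $m$,
\[
\mathbb{E}\Bigl[\Bigl|\sum_{m}X_{im}T_{mj}\Bigr|^{2}\Bigr] \;=\; \sum_{m=1}^{\ell}T_{mj}^{2}\,\mathbb{E}[X_{im}^{2}] \;\leq\; \lambda_{1}\sum_{m=1}^{\ell}T_{mj}^{2}.
\]
Plugging back in and swapping the order of summation,
\[
\mathbb{E}\bigl[\|\mtx{\Sigma}_2\mtx{V}_2^{*}\mtx{\Omega}\mtx{T}\|_{\HS}^{2}\bigr] \;\leq\; \lambda_{1}\Bigl(\sum_{i=1}^{\infty}\sigma_{k+i}^{2}\Bigr)\Bigl(\sum_{j=1}^{k}\sum_{m=1}^{\ell}T_{mj}^{2}\Bigr) \;=\; \lambda_{1}\|\mtx{\Sigma}_2\|_{\HS}^{2}\|\mtx{T}\|_{\textup{F}}^{2},
\]
which is the desired inequality.

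\textbf{Main obstacle.} The only subtlety is that the row index $i$ runs over an infinite set, so one has to be a little careful justifying interchanging $\mathbb{E}$ with the infinite sum and applying the covariance computation of \cref{prop_cov_matrix} (which is stated for a finite truncation) to the full quasimatrix $\mtx{V}_2$. Both steps are handled by Tonelli / monotone convergence on nonnegative summands, using that $\|\mtx{\Sigma}_2\|_{\HS}^{2} = \sum_{i\geq 1}\sigma_{k+i}^{2} < \infty$ since $\F$ is Hilbert--Schmidt.
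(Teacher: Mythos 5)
Your proof is correct, and it is essentially the same argument as the paper's but streamlined. The paper first takes the SVD $\mtx{T}=\mtx{U}_{\mtx{T}}\mtx{D}_{\mtx{T}}\mtx{V}_{\mtx{T}}^*$, exploits invariance of the HS norm under right multiplication by $\mtx{V}_{\mtx{T}}$, and then expands column-by-column, eventually using orthonormality of the columns of $\mtx{U}_{\mtx{T}}$ to recover $\|\mtx{T}\|_{\textup{F}}^2$. You skip the SVD of $\mtx{T}$ entirely: you expand the HS norm as a double sum over entries and kill the cross-terms in $m\neq m'$ directly by the independence (and zero mean) of the columns of $\mtx{\Omega}$. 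Note that the paper's proof also uses this same independence implicitly (when it passes from $\|\mtx{\Sigma}_2\mtx{\Omega}_2\mtx{U}_{\mtx{T}}(:,i)\|_2^2$ to $\sum_{n,j}\sigma_{k+n}^2|\mtx{\Omega}_2(n,j)|^2\mtx{U}_{\mtx{T}}(j,i)^2$ inside the expectation, the cross-terms must vanish for that equality to hold), so your approach is not merely equivalent but makes the dependence on independence more transparent. In both arguments the heart of the matter is the same second-moment bound $\E[X_{im}^2]=\int v_{k+i}K v_{k+i}\leq\lambda_1$ from \cref{prop_cov_matrix}, and both handle the infinite row index via Tonelli/monotone convergence, which you correctly justify using $\|\mtx{\Sigma}_2\|_{\HS}^2<\infty$. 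Your version is slightly more elementary, at the cost of not parallelling the finite-dimensional HMT argument (which does use the SVD of $\mtx{T}$ because there one wants rotational invariance of a standard Gaussian — an invariance that is lost in this non-standard setting anyway).
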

\begin{proof}
Let $\mtx{T} = \mtx{U}_\mtx{T} \mtx{D}_\mtx{T} \mtx{V}_\mtx{T}^*$ be the SVD of $\mtx{T}$. If $\{v_{\mtx{T},i}\}_{i=1}^k$ are the columns of $\mtx{V}_\mtx{T}$, then 
\[
\E\left[\|\mtx{\Sigma}_2 \mtx{V}_2^* \mtx{\Omega}\mtx{T}\|_{\HS}^2\right]= \sum_{i=1}^k\E\left[\|\mtx{\Sigma}_2 \mtx{\Omega}_2\mtx{U}_{\mtx{T}} \mtx{D}_\mtx{T} \mtx{V}_\mtx{T}^* v_{\mtx{T},i}\|_2^2\right],
\]
where $\mtx{\Omega}_2=\mtx{V}_2^*\mtx{\Omega}$. Therefore, we have
\[
\E\left[\|\mtx{\Sigma}_2 \mtx{\Omega}_2\mtx{T}\|_{\HS}^2\right] = \sum_{i=1}^k ((\mtx{D}_\mtx{T})_{ii})^2 \E\left[\|\mtx{\Sigma}_2 \mtx{\Omega}_2\mtx{U}_{\mtx{T}}(:,i)\|_{2}^2\right].
\]
Moreover, using the monotone convergence theorem for non-negative random variables, we have 
\begin{align*}
\E\left[\|\mtx{\Sigma}_2 \mtx{\Omega}_2\mtx{U}_{\mtx{T}}(:,i)\|_{2}^2\right]
&= \E\left[\sum_{n=1}^\infty\sum_{j=1}^{\ell}\sigma_{k+n}^2 \left|\mtx{\Omega}_2(n,j)\right|^2\mtx{U}_{\mtx{T}}(j,i)^2\right]\\
&= \sum_{n=1}^\infty\sum_{j=1}^{\ell}\sigma_{k+n}^2 \mtx{U}_{\mtx{T}}(j,i)^2\E\left[\left|\mtx{\Omega}_2(n,j)\right|^2\right],
\end{align*}
where $\sigma_{k+1},\sigma_{k+2},\ldots$ are the diagonal elements of $\mtx{\Sigma}_2$. Then, the quasimatrix $\mtx{\Omega}_2$ has independent columns and, using \cref{prop_cov_matrix}, we have 
\[\E\left[|\mtx{\Omega}_2(n,j)|^2\right] = \int_{D_1\times D_1}v_{k+n}(x)K(x,y)v_{k+n}(y)\d x\d y,\] where $v_{k+n}$ is the $n$th column of $\mtx{V}_2$. Then, $\E\left[|\mtx{\Omega}_2(n,j)|^2\right]\leq \lambda_1$, as $\E\left[|\mtx{\Omega}_2(n,j)|^2\right]$ is written as a Rayleigh quotient. Finally, we have 
\[\E\left[\|\mtx{\Sigma}_2 \mtx{V}_2^* \mtx{\Omega} \mtx{T}\|_{\HS}^2\right]\leq \lambda_1\sum_{i=1}^k ((\mtx{D}_{\mtx{T}})_{ii})^2\sum_{j=1}^{\ell}\mtx{U}_{\mtx{T}}(j,i)^2\sum_{n=1}^\infty\sigma_{k+n}^2 = \lambda_1 \|\mtx{T}\|_{\textup{F}}^2\|\mtx{\Sigma}_2\|_{\HS}^2,\] 
by orthonormality of the columns on $\mtx{U}_{\mtx{T}}$.
\end{proof}

We are now ready to prove~\cref{th_tropp_random_svd_Frob}, which shows that the randomized SVD can be generalized to HS operators. 
\begin{proof}[Proof of Theorem~\ref{th_tropp_random_svd_Frob}]
Let $\mtx{\Omega}_1, \mtx{\Omega}_2$ be the quasimatrices defined in~\cref{th_tropp_deter_svd}. The $k\times (k+p)$ matrix $\mtx{\Omega}_1$ has full rank with probability one and by \cref{th_tropp_deter_svd}, we have
\begin{align*}
\E\left[\|(\mtx{I}-\mtx{P}_\mtx{Y})\F\|_{\HS}\right] &\leq \E\left[\left(\|\mtx{\Sigma}_2\|_{\HS}^2+ \|\mtx{\Sigma}_2\mtx{\Omega}_2\mtx{\Omega}_1^{\dagger}\|_{\HS}^2\right)^{1/2}\right]\leq \|\mtx{\Sigma}_2\|_{\HS} + \E \|\mtx{\Sigma}_2\mtx{\Omega}_2\mtx{\Omega}_1^{\dagger}\|_{\HS}\\
&\leq \|\mtx{\Sigma}_2\|_{\HS} + \E \left[\|\mtx{\Sigma}_2\mtx{\Omega}_2\|_{\HS}^2\right]^{1/2}\E \left[\|\mtx{\Omega}_1^\dagger\|_{\textup{F}}^2\right]^{1/2},
\end{align*}
where the last inequality follows from Cauchy--Schwarz inequality. Then, using \cref{prop_rand_1} and \cref{prop_rand_2}, we have
\[\E \left[\|\mtx{\Sigma}_2\mtx{\Omega}_2\|_{\HS}^2\right]\leq \lambda_1(k+p)\|\mtx{\Sigma}_2\|_{\HS}^2,\qquad\text{and} \qquad \E\left[\|\mtx{\Omega}_1\|^2_{\textup{F}}\right]\leq \frac{1}{\gamma_k\lambda_1}\frac{k}{p-1}.\]
where $\gamma_k$ is defined in \cref{sec_quality_kernel}.
The observation that $\|\mtx{\Sigma}_2\|_{\HS}^2=\sum_{j=k+1}^\infty \sigma_j^2$ concludes the proof of \cref{eq:MainExpectationBound}.

For the probabilistic bound in \cref{eq:MainProbabilityBound}, we note that by \cref{th_tropp_deter_svd} we have,
\[
\|\F-\mtx{P}_\mtx{Y}\F\|_{\HS}^2\leq \|\mtx{\Sigma}_2\|_{\HS}^2+\|\mtx{\Sigma}_2\mtx{\Omega}_2\mtx{\Omega}_1^{\dagger}\|_{\HS}^2\leq (1+\|\mtx{\Omega}_2\|_{\HS}^2\|\mtx{\Omega}_1^{\dagger}\|_{\textup{F}}^2)\|\mtx{\Sigma}_2\|_{\HS}^2,
\]
where the second inequality uses the submultiplicativity of the HS norm. The bound follows from bounding $\|\mtx{\Omega}_1^\dagger\|_{\textup{F}}^2$ and $\|\mtx{\Omega}_2\|_{\HS}^2$ using \cref{thm_propa_bound,prop_control_matrices}, respectively.
\end{proof}

\section{Recovering the Green's function from input-output pairs} \label{sec_approx_Green}

It is known that the Green's function associated with~\cref{eq:PDEsimple} always exists, is unique, is a nonnegative function $G : D \times D \to \R^+ \cup \{\infty\}$ such that 
\[
u(x) =\int_D G(x,y)f(y)\d y, \qquad  f \in \mathcal{C}_c^\infty(D), 
\]
and for each $y\in\Omega$ and any $r>0$, we have $G(\cdot, y) \in \mathcal{H}^1(D \setminus B_r(y))\cap \mathcal{W}_0^{1,1}(D)$~\cite{gruter1982green}.\footnote{Here, $B_r(y) = \{z\in\mathbb{R}^3 : \|z - y\|_2 < r\}$, $\mathcal{W}^{1,1}(D)$ is the space of weakly differentiable functions in the $L^1$-sense, and $\mathcal{W}^{1,1}_0(D)$ is the closure of $\mathcal{C}_c^\infty(D)$ in $\mathcal{W}^{1,1}(D)$.}
Since the PDE in~\cref{eq:PDEsimple} is self-adjoint, we also know that for almost every $x,y\in D$, we have $G(x,y) = G(y,x)$~\cite{gruter1982green}. 

We now state \cref{th_Green}, which shows that if $N = \mathcal{O}(\epsilon^{-6}\log^4(1/\epsilon))$ and one has $N$ input-output pairs $\{(f_j,u_j)\}_{j=1}^N$ with algorithmically-selected $f_j$, then the Green's function associated with $\mathcal{L}$ in \cref{eq:PDEsimple} can be recovered to within an accuracy of $\mathcal{O}(\Gamma_\epsilon^{-1/2}\log^3(1/\epsilon)\epsilon)$ with high probability. Here, the quantity $0<\Gamma_\epsilon\leq 1$ measures the quality of the random input functions $\{f_j\}_{j=1}^N$ (see~\cref{sec_green_reconst_adm}).

\begin{theorem} \label{th_Green}
Let $0<\epsilon<1$, $D\subset \R^3$ be a bounded Lipschitz domain, and $\L$ given in~\cref{eq:PDEsimple}. If $G$ is the Green's function associated with $\L$, then there is a randomized algorithm that constructs an approximation $\tilde{G}$ of $G$ using $\mathcal{O}(\epsilon^{-6}\log^4(1/\epsilon))$ input-output pairs such that, as $\epsilon\rightarrow 0$, we have
\begin{equation} \label{eq_proba_Green_bound}
\|G-\tilde{G}\|_{L^2(D\times D)} = \mathcal{O} \left(\Gamma_\epsilon^{-1/2}\log^{3}(1/\epsilon)\epsilon\right)\|G\|_{L^2(D\times D)},
\end{equation}
with probability $\geq 1 - \mathcal{O}(\epsilon^{\log(1/\epsilon)-6})$. The term $\Gamma_\epsilon$ is defined by~\cref{eq_define_gamma_eps}.
\end{theorem}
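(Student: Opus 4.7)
My plan is to combine the hierarchical low-rank structure of $G$ with a block-by-block application of~\cref{th_tropp_random_svd_Frob}. The first step is to build a hierarchical partition of $D \times D$ by recursively subdividing $D$ into cubes of decreasing size (an octree in three dimensions) with $L = \mathcal{O}(\log(1/\epsilon))$ levels. At each level, a pair of cubes $X \times Y$ is declared \emph{admissible} when the standard separation condition $\dist(X,Y) \gtrsim \max\{\diam(X),\diam(Y)\}$ holds; these admissible blocks, together with the non-admissible blocks at the finest level, form a disjoint cover of $D \times D$. With finest cubes of diameter $h \sim \epsilon^{2}$, the numbers of admissible blocks and of near-diagonal leaves are both $\mathcal{O}(\epsilon^{-6})$.

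On the near-diagonal region I would simply set $\tilde G \equiv 0$. Using the off-diagonal decay $G(x,y) \leq c\|x-y\|_2^{-1}\|G\|_{L^2(D\times D)}$, an elementary computation in $\mathbb{R}^3$ gives
\[
\int_{\|x-y\|_2 \leq h} G(x,y)^2\,\d x\,\d y \;\lesssim\; h\,|D|\,\|G\|_{L^2(D\times D)}^2,
\]
so the near-diagonal contribution to $\|G-\tilde G\|_{L^2(D\times D)}$ is $\mathcal{O}(\sqrt{h}\|G\|_{L^2(D\times D)}) = \mathcal{O}(\epsilon\|G\|_{L^2(D\times D)})$. On each admissible block $(X,Y)$, the Bebendorf--Hackbusch theorem (\cref{theo_bebendorf}) supplies a rank $k = \mathcal{O}(\log^4(1/\epsilon))$ kernel approximant with $\|G-G_k\|_{L^2(X\times Y)} \leq \epsilon\,\|G\|_{L^2(X\times \hat Y)}$ on a slightly fattened column domain $\hat Y \supset Y$, which I would then seek to learn via randomized SVD.

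To do so, for each admissible block I would apply~\cref{th_tropp_random_svd_Frob} to the Hilbert--Schmidt restriction $f \mapsto (\int_D G(\cdot,y)f(y)\,\d y)|_X$ acting on $L^2(\hat Y)$, generating $k+p = \mathcal{O}(\log^4(1/\epsilon))$ random input functions drawn from a Gaussian process whose covariance kernel is essentially supported in $\hat Y$. Let $\gamma_k^{(X,Y)}$ denote the quality factor of~\cref{th_tropp_random_svd_Frob} for this block, and let $\Gamma_\epsilon$ be the aggregate quantity defined in~\cref{eq_define_gamma_eps}. Taking $t$ a constant and $s \sim \log(1/\epsilon)$ in the probability bound makes the per-block failure probability $\lesssim \epsilon^{\log(1/\epsilon)}$ (using that $[se^{-(s^2-1)/2}]^{k+p} \leq e^{-s^2}$ for $s\geq 2$), so a union bound over the $\mathcal{O}(\epsilon^{-6})$ admissible blocks yields overall success at least $1-\mathcal{O}(\epsilon^{\log(1/\epsilon)-6})$. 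The total number of input--output pairs is (admissible blocks) $\times (k+p) = \mathcal{O}(\epsilon^{-6}\log^4(1/\epsilon))$, as claimed.

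To assemble the global estimate, I would square and sum the per-block $L^2$ errors. The crucial structural observation is that the fattened column domains $\hat Y_i$ overlap with multiplicity only $\mathcal{O}(L) = \mathcal{O}(\log(1/\epsilon))$, giving $\sum_i \|G\|_{L^2(X_i\times \hat Y_i)}^2 \lesssim \log(1/\epsilon)\,\|G\|_{L^2(D\times D)}^2$; combined with the per-block factor $\Gamma_\epsilon^{-1/2}\,\mathrm{poly}(\log(1/\epsilon))$ from~\cref{th_tropp_random_svd_Frob} and the Bebendorf factor $\epsilon$, this yields the advertised rate. The main obstacle, I expect, is controlling the aggregate quality factor: the right singular functions of the local restricted Green's operator are unknown and differ from block to block, so one can only hope for a~\cref{lem_bound_sigma}-style lower bound on $\gamma_k^{(X,Y)}$, and one must exhibit a single structured GP kernel whose restrictions to all the $\hat Y$ simultaneously yield a nontrivial $\Gamma_\epsilon$. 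A secondary but still delicate point is keeping the fattened-domain overlap count at the $\log(1/\epsilon)$ level rather than polynomial in $\epsilon^{-1}$, which is precisely what turns a potentially polynomial loss into the $\log^3(1/\epsilon)$ factor of the final bound.
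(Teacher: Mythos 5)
Your plan follows the paper's argument faithfully: the same octree partition with finest cells of side $\sim\epsilon^2$, the same use of the off-diagonal pointwise bound to zero out $G$ on the $\mathcal{O}(\epsilon^{-6})$ near-diagonal leaves, the same per-block coupling of \cref{theo_bebendorf} with \cref{th_tropp_random_svd_Frob}, the same parameter choices $t=\mathcal{O}(1)$, $s\sim\log(1/\epsilon)$ leading to a per-block failure probability $\mathcal{O}(\epsilon^{\log(1/\epsilon)})$, and the same $\Gamma_\epsilon=\min_{X\times Y\in P_{\text{adm}}}\gamma_{k_\epsilon,X\times Y}$ aggregation, with the overall pair count $\mathcal{O}(\epsilon^{-6}\log^4(1/\epsilon))$.

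There is, however, one genuine gap you do not address: \cref{th_tropp_random_svd_Frob} produces the approximant $\mtx{P}_{\mtx{Y}}\F$ with $\mtx{Y}=\F\mtx{\Omega}$, but to actually write down the kernel of $\mtx{P}_{\mathcal{R}_X\mtx{Y}}\mathcal{R}_X\F\mathcal{R}_Y^*$ on a block one must apply $\mtx{P}_{\mathcal{R}_X\mtx{Y}}$ \emph{from the left}, which requires evaluating $\F^*$ on the columns of $\mathcal{R}_X\mtx{Y}$ --- and in this learning setting the only queries available are forward ones $f\mapsto\F f$. The paper resolves this by exploiting self-adjointness of $\F$ (equivalently $G(x,y)=G(y,x)$): it builds $\tilde G_{Y\times X}=\mathcal{R}_Y\F\mathcal{R}_X^*\mtx{P}_{\mathcal{R}_X\mtx{Y}}$ by asking for the extra forward products $\F(\mathcal{R}_X^*\mathcal{R}_X\mtx{Y})$, which is why each admissible block costs $2(k_\epsilon+p)$ rather than $k_\epsilon+p$ pairs. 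Without something like this, your construction of $\tilde G$ on a block cannot actually be assembled from input-output data.

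Two smaller points. First, the overlap multiplicity of the fattened column domains $\hat Y$ is $\mathcal{O}(1)$ (the paper counts $5^3=125$ neighbours), not $\mathcal{O}(\log(1/\epsilon))$: for fixed $(x,y)$ the level at which $X\times Y$ is admissible while the parent is not is determined by $\|x-y\|_2$ up to a bounded offset, so the bound is uniform in the depth. Consequently the $\log^3(1/\epsilon)$ prefactor in \cref{eq_proba_Green_bound} does not originate from the overlap count as you suggest; it comes from $t^2 s^2 k_\epsilon = \mathcal{O}(\log^6(1/\epsilon))$ with $s\sim\log(1/\epsilon)$ and $k_\epsilon\sim\log^4(1/\epsilon)$, which after a square root gives $\log^3(1/\epsilon)$. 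Second, your estimate of the near-diagonal contribution as $\mathcal{O}(\sqrt{h}\,\|G\|)$ is a cleaner version of the paper's block-by-block accounting of $\|G\|_{L^2(P_{\text{non-adm}})}$ and lands on the same $\epsilon$ after choosing $h\sim\epsilon^2$, so that part is fine.
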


Our algorithm that leads to the proof of \cref{th_Green} relies on the extension of the randomized SVD to HS operator (see \cref{sec_random_SVD}) and a hierarchical partition of the domain of $G$ into ``well-separated" domains. 

\subsection{Recovering the Green's function on admissible domains} \label{sec_exist_Green}
Roughly speaking, as $\|x-y\|_2$ increases $G$ becomes smoother about $(x,y)$, which can be made precise using so-called admissible domains~\cite{ballani2016matrices,bebendorf2008hierarchical,hackbusch2015hierarchical}. Let $\diam X\coloneq\sup_{x,y\in X}\|x-y\|_2$ be the diameter of $X$, $\dist(X,Y)\coloneq\inf_{x\in X,y\in Y}\|x-y\|_2$ be the shortest distance between $X$ and $Y$, and $\rho>0$ be a fixed constant. If $X,Y\subset \R^3$ are bounded domains, then we say that $X\times Y$ is an admissible domain if $\dist(X,Y)\geq \rho \max \{\diam X, \diam Y\}$; otherwise, we say that $X\times Y$ is non-admissible. There is a weaker definition of admissible domains as $\dist(X,Y)\geq \rho \min \{\diam X, \diam Y\}$~\cite[p.~59]{hackbusch2015hierarchical}, but we do not consider it.

\subsubsection{Approximation theory on admissible domains}\label{sec:ApproxTheoryAdmissibleDomain}
It turns out that the Green's function associated with~\cref{eq:PDEsimple} has rapidly decaying singular values when restricted to admissible domains. Roughly speaking, if $X,Y\subset D$ are such that $X\times Y$ is an admissible domain, then $G$ is well-approximated by a function of the form~\cite{bebendorf2003existence}
\begin{equation} \label{eq_approx_G}
G_k(x,y) = \sum_{j=1}^k g_j(x)h_j(y), \qquad (x,y)\in X\times Y,
\end{equation}
for some functions $g_1,\ldots,g_k\in L^2(X)$ and $h_1,\ldots,h_k\in L^2(Y)$. This is summarized in \cref{theo_bebendorf}, which is a corollary of~\cite[Thm.~2.8]{bebendorf2003existence}. 
\begin{theorem} \label{theo_bebendorf}
Let $G$ be the Green's function associated with~\cref{eq:PDEsimple} and $\rho>0$. Let $X,Y\subset D$ such that $\dist(X,Y)\geq \rho\max\{\diam X, \diam Y\}$.  Then, for any $0<\epsilon<1$, there exists $k\leq k_\epsilon\coloneq\lceil c(\rho,\diam D,\kappa_C)\rceil \lceil\log (1/\epsilon)\rceil^{4}+\lceil\log (1/\epsilon)\rceil$ and an approximant, $G_k$, of $G$ in the form given in~\cref{eq_approx_G} such that
\[
\|G-G_k\|_{L^2(X\times Y)}\leq \epsilon \|G\|_{L^2(X\times \hat{Y})}, \qquad \hat{Y}\coloneq\{y\in D,\, \dist(y,Y)\leq \frac{\rho}{2}\diam Y\},
\]
where $\kappa_C = \lambda_{\max}/\lambda_{\min}$ is the spectral condition number of the coefficient matrix $A(x)$ in~\cref{eq:PDEsimple}\footnote{Here, $\lambda_{\max}$ is defined as $\sup_{x\in D}\lambda_{\max}(A(x))$ and $\lambda_{\min} = \inf_{x\in D}\lambda_{\min}(A(x))>0$.} and $c$ is a constant that only depends on $\rho$, $\diam D$, $\kappa_C$. 
\end{theorem}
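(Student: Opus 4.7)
The plan is to invoke Theorem~2.8 of Bebendorf--Hackbusch essentially verbatim, specializing to $d=3$ and tracking how the constants depend on $\rho$, $\diam D$, and $\kappa_C$. The main task is to verify that their construction applies to $\L$ in~\cref{eq:PDEsimple} and to recast the conclusion in the form stated.

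\textbf{Step 1: Harmonicity on the enlarged domain.} Fix $x\in X$ and set $u_x(\cdot) := G(x,\cdot)$. By admissibility, $\dist(X,Y)\geq \rho\max\{\diam X,\diam Y\}\geq \rho\,\diam Y$, and since $\hat Y$ consists of points within $(\rho/2)\diam Y$ of $Y$, we get $\dist(X,\hat Y)\geq (\rho/2)\diam Y>0$, so $x\notin \hat Y$. Because $\L$ is self-adjoint, $u_x$ is therefore a weak solution of $\L u_x = 0$ on $\hat Y$, i.e., $u_x\in \mathcal{H}^1(\hat Y)$ is $\L$-harmonic there.

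\textbf{Step 2: Caccioppoli inequality.} For any $\L$-harmonic $u$ on a subdomain $\Omega\subset D$ and any $\Omega'\Subset\Omega$ with $\dist(\Omega',\partial\Omega)\geq h>0$, the standard energy estimate yields $\|\nabla u\|_{L^2(\Omega')}\leq (C(\kappa_C)/h)\|u\|_{L^2(\Omega)}$. This inequality is the only place where the regularity of the coefficient matrix $A$ enters, and it is exactly here that the condition number $\kappa_C$ appears in the final constant.

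\textbf{Step 3: Nested dyadic construction.} Interpose a family of enlargements $Y=Y_0\subset Y_1\subset\cdots\subset Y_L=\hat Y$ with $L=\lceil\log(1/\epsilon)\rceil$ levels and spacing $\dist(\partial Y_\ell,Y_{\ell-1})\asymp \rho\,\diam Y/L$. On each shell, approximate $u_x$ by an element of a polynomial space of dimension $k_0=O(\log^3(1/\epsilon))$, using the Caccioppoli bound of Step~2 together with a Bramble--Hilbert--type argument. The nested geometry yields a contraction factor per level that, when balanced against $k_0$, produces an error reduction of a constant factor better than $e^{-1}$ per level.

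\textbf{Step 4: Telescoping and assembly.} Iterating across all $L$ levels gives a subspace $W_k\subset L^2(Y)$ of dimension
\[
k \;\leq\; L\cdot k_0 \;=\; \lceil c(\rho,\diam D,\kappa_C)\rceil\lceil\log(1/\epsilon)\rceil^4 + \lceil\log(1/\epsilon)\rceil
\]
together with a projection $\Pi_k$ onto $W_k$ satisfying $\|u_x-\Pi_k u_x\|_{L^2(Y)}\leq \epsilon\|u_x\|_{L^2(\hat Y)}$. Writing $\{h_j\}_{j=1}^k$ for a basis of $W_k$, the approximant takes the separable form $(\Pi_k u_x)(y)=\sum_{j=1}^k g_j(x)h_j(y)$ with coefficient functions $g_j(x)$ built from inner products of $u_x$ with the $h_j$; set $G_k(x,y)$ to be this function. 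Squaring the pointwise-in-$x$ bound and integrating over $X$ gives
\[
\|G-G_k\|_{L^2(X\times Y)}^2 \;\leq\; \epsilon^2\int_X \|G(x,\cdot)\|_{L^2(\hat Y)}^2\d x \;=\; \epsilon^2\|G\|_{L^2(X\times\hat Y)}^2,
\]
which is the desired estimate.

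\textbf{Main obstacle.} The essential difficulty sits in Steps~3--4: proving the polylogarithmic rank bound $k=O(\log^4(1/\epsilon))$ for $\L$-harmonic functions when $A$ is only in $L^\infty$. One cannot appeal to analytic regularity or classical spectral approximation; instead, the argument must carefully balance (i) the number of dyadic levels, (ii) the polynomial order on each level, and (iii) the growth of constants depending on $\kappa_C$ from the repeated use of Caccioppoli. This is precisely the delicate construction of~\cite{bebendorf2003existence}, and our role here is simply to verify that it applies to $\L$ as written in~\cref{eq:PDEsimple} with $d=3$ and to read off the dependence of $c$ on $(\rho,\diam D,\kappa_C)$.
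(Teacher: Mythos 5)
Your proposal is correct and takes essentially the same route as the paper: both reduce the claim to \cite[Thm.~2.8]{bebendorf2003existence}, which supplies the pointwise-in-$x$ bound $\|G(x,\cdot)-G_k(x,\cdot)\|_{L^2(Y)}\leq\epsilon\|G(x,\cdot)\|_{L^2(\hat Y)}$, and then conclude by squaring and integrating over $X$. The extra Steps 1--3 you spell out (harmonicity on $\hat Y$, Caccioppoli, nested dyadic approximation) are the internal machinery of Bebendorf--Hackbusch that the paper simply cites rather than reproves; the only small detail the paper addresses that you do not is that the constant in \cite[Thm.~2.8]{bebendorf2003existence} a priori depends on $\diam Y$, which is turned into a dependence on $\diam D$ by embedding $Y$ in a convex set of diameter $\diam D$.
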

\begin{proof} 
In~\cite[Thm.~2.8]{bebendorf2003existence}, it is shown that if $Y=\tilde{Y}\cap D$ and $\tilde{Y}$ is convex, then there exists $k\leq c_{\rho/2}^3\lceil\log (1/\epsilon)\rceil^{4}+\lceil\log (1/\epsilon)\rceil$ and an approximant, $G_k$, of $G$ such that 
\begin{equation} \label{sec_sep_approx}
\|G(x,\cdot)-G_k(x,\cdot)\|_{L^2(Y)}\leq \epsilon \|G(x,\cdot)\|_{L^2(\hat{Y})}, \qquad x\in X,
\end{equation}
where $\hat{Y}\coloneq\{y\in D,\, \dist(y,Y)\leq \frac{\rho}{2}\diam Y\}$ and $c_{\rho/2}$ is a constant that only depends on $\rho$, $\diam Y$, and $\kappa_C$. As remarked by~\cite{bebendorf2003existence}, $\tilde{Y}$ can be included in a convex of diameter $\diam D$ that includes $D$ to obtain the constant $c(\rho,\diam D,\kappa_C)$. The statement follows by integrating the error bound in~\cref{sec_sep_approx} over $X$.
\end{proof} 

Since the truncated SVD of $G$ on $X\times Y$ gives the best rank $k_\epsilon\geq k$ approximation to $G$,~\cref{theo_bebendorf} also gives bounds on singular values:
\begin{equation} \label{eq_tail_SVD_error}
\left(\sum\nolimits_{j=k_\epsilon+1}^\infty\sigma_{j,X\times Y}^2\right)^{1/2} \leq \|G-G_{k}\|_{L^2(X\times Y)} \leq \epsilon \|G\|_{L^2(X\times \hat{Y})},
\end{equation}
where $\sigma_{j,X\times Y}$ is the $j$th singular value of $G$ restricted to $X\times Y$. Since $k_\epsilon = \mathcal{O}(\log^4(1/\epsilon))$, we conclude that the singular values of $G$ restricted to admissible domains $X\times Y$ rapidly decay to zero. 

\subsubsection{Randomized SVD for admissible domains} \label{sec_adm_domain}

Since $G$ has rapidly decaying singular values on admissible domains $X\times Y$, we use the randomized SVD for HS operators to learn $G$ on $X\times Y$ with high probability (see~\cref{sec_random_SVD}).

We start by defining a GP on the domain $Y$. Let $\mathcal{R}_{Y\times Y}K$ be the restriction\footnote{We denote the restriction operator by $\mathcal{R}_{Y\times Y}:L^2(D\times D)\to L^2(Y\times Y)$.} of the covariance kernel $K$ to the domain $Y\times Y$, which is a continuous symmetric positive definite kernel so that $\mathcal{GP}(0,\mathcal{R}_{Y\times Y}K)$ defines a GP on $Y$. We choose a target rank $k\geq 1$, an oversampling parameter $p\geq 2$, and form a quasimatrix $\mtx{\Omega} = \begin{bmatrix}f_1\,|\,\cdots \,|\, f_{k+p}\end{bmatrix}$ such that $f_j\in L^2(Y)$ and $f_j\sim \mathcal{GP}(0,\mathcal{R}_{Y\times Y} K)$ are identically distributed and independent. We then extend by zero each column of $\mtx{\Omega}$ from $L^2(Y)$ to $L^2(D)$ by $\mathcal{R}_Y^*\mtx{\Omega}=\begin{bmatrix}\mathcal{R}_Y^* f_1\,|\,\cdots \,|\, \mathcal{R}_Y^* f_{k+p}\end{bmatrix}$, where $\mathcal{R}_Y^* f_j\sim \mathcal{GP}(0,\mathcal{R}_{Y\times Y}^*\mathcal{R}_{Y\times Y}K)$. The zero extension operator $\mathcal{R}_Y^*:L^2(Y)\to L^2(D)$ is the adjoint of $\mathcal{R}_Y:L^2(D)\to L^2(Y)$.

Given the training data, $\mtx{Y} = \begin{bmatrix}u_1\,|\,\cdots \,|\, u_{k+p} \end{bmatrix}$ such that $\mathcal{L}u_j = \mathcal{R}_Y^* f_j$ and $u_j|_{\partial D} = 0$, we now construct an approximation to $G$ on $X\times Y$ using the randomized SVD (see~\cref{sec_random_SVD}). Following \cref{th_tropp_random_svd_Frob}, we have the following approximation error for $t\geq 1$ and $s\geq 2$:
\begin{equation} \label{eq_approx_error_loc_Green}
\|G-\tilde{G}_{X\times Y}\|_{L^2(X\times Y)}^2 \leq \left(1+t^2s^2\frac{3}{\gamma_{k,X\times Y}}\frac{k(k+p)}{p+1}\sum_{j=1}^\infty\frac{\lambda_j}{\lambda_1}\,\right)\left(\sum\nolimits_{j=k+1}^\infty\sigma_{j,X\times Y}^2\right)^{1/2},
\end{equation}
with probability greater than $1-t^{-p}-e^{-s^2(k+p)}$. Here, $\lambda_1\geq \lambda_2\geq\cdots>0$ are the eigenvalues of $K$, $\tilde{G}_{X\times Y} = \mtx{P}_{\mathcal{R}_{X}\mtx{Y}}\mathcal{R}_{X}\F\mathcal{R}_{Y}^*$ and $\mtx{P}_{\mathcal{R}_{X}\mtx{Y}} = \mathcal{R}_{X}\mtx{Y}((\mathcal{R}_{X}\mtx{Y})^*\mathcal{R}_{X}\mtx{Y})^{\dagger}(\mathcal{R}_{X}\mtx{Y})^*$ is the orthogonal projection onto the space spanned by the columns of $\mathcal{R}_{X}\mtx{Y}$. Moreover, $\gamma_{k,X\times Y}$ is a measure of the quality of the covariance kernel of $\mathcal{GP}(0,\mathcal{R}_{Y\times Y}^*\mathcal{R}_{Y\times Y}K)$ (see \cref{sec_quality_kernel}) and, for $1\leq i,j\leq k$, defined as $
\gamma_{k,X\times Y} = k/(\lambda_1\Tr(\mtx{C}_{X\times Y}^{-1}))$, where
\[[\mtx{C}_{X\times Y}]_{ij} = \int_{D\times D} \mathcal{R}_Y^*v_{i,X\times Y}(x)K(x,y) \mathcal{R}_Y^*v_{j,X\times Y}(y)\d x\d y,\]
and $v_{1,X\times Y},\ldots, v_{k,X\times Y}\in L^2(Y)$ are the first $k$ right singular functions of $G$ restricted to $X\times Y$. 

Unfortunately, there is a big problem with the formula $\tilde{G}_{X\times Y} = \mtx{P}_{\mathcal{R}_X\mtx{Y}}\mathcal{R}_{X}\F\mathcal{R}_{Y}^*$. It cannot be formed because we only have access to input-output data, so we have no mechanism for composing $\mtx{P}_{\mathcal{R}_X\mtx{Y}}$ on the left of $\mathcal{R}_{X}\F\mathcal{R}_{Y}^*$. Instead, we note that since the partial differential operator in~\cref{eq:PDEsimple} is self-adjoint, $\F$ is self-adjoint, and $G$ is itself symmetric. That means we can use this to write down a formula for $\tilde{G}_{Y\times X}$ instead. That is, 
\[
\tilde{G}_{Y\times X} = \tilde{G}_{X\times Y}^* = \mathcal{R}_{Y}\F\mathcal{R}_{X}^*\mtx{P}_{\mathcal{R}_X\mtx{Y}}, 
\]
where we used the fact that $\mtx{P}_{\mathcal{R}_X\mtx{Y}}$ is also self-adjoint. This means we can construct $\tilde{G}_{Y\times X}$ by asking for more input-output data to assess the quasimatrix $\F(\mathcal{R}_X^*\mathcal{R}_X\mtx{Y})$. Of course, to compute $\tilde{G}_{X\times Y}$, we can swap the roles of $X$ and $Y$ in the above argument. 

With a target rank of $k=k_\epsilon = \lceil c(\rho,\diam D,\kappa_C)\rceil\lceil\log (1/\epsilon)\rceil^{4}+\lceil\log (1/\epsilon)\rceil$ and an oversampling parameter of $p = k_\epsilon$, we can combine~\cref{theo_bebendorf} and \cref{eq_tail_SVD_error,eq_approx_error_loc_Green} to obtain the bound 
\[\|G-\tilde{G}_{X\times Y}\|_{L^2(X\times Y)}^2 \leq \left(1+t^2s^2\frac{6k_\epsilon}{\gamma_{k_{\epsilon},X\times Y}}\sum_{j=1}^\infty\frac{\lambda_j}{\lambda_1}\,\right)\epsilon^2\|G\|_{L^2(X\times \hat{Y})}^2 ,\]
with probability greater than $1-t^{-k_{\epsilon}}-e^{-2s^2 k_\epsilon}$. A similar approximation error holds for $\tilde{G}_{Y\times X}$ without additional evaluations of $\F$. We conclude that our algorithm requires $N_{\epsilon, X\times Y} \!= 2(k_\epsilon+p) =  \mathcal{O}\!\left(\log^4(1/\epsilon)\right)$ input-output pairs to learn an approximant to $G$ on $X\times Y$ and $Y\times X$.

\subsection{Ignoring the Green's function on non-admissible domains} \label{sec_lp_estimates}
When the Green's function is restricted to non-admissible domains, its singular values may not decay. Instead, to learn $G$ we take advantage of the off-diagonal decay property of $G$.
It is known that for almost every $x\neq y\in D$ then
\begin{equation} \label{eq_widman_green}
G(x,y)\leq \frac{c_{\kappa_C}}{\|x-y\|_2}\|G\|_{L^2(D\times D)},
\end{equation}
where $c_{\kappa_C}$ is an implicit constant that only depends on $\kappa_C$ (see~\cite[Thm.~1.1]{gruter1982green}).\footnote{Note that we have normalized~\cite[Eq.~1.8]{gruter1982green} to highlight the dependence on $\|G\|_{L^2(D\times D)}$.}

If $X\times Y$ is a non-admissible domain, then for any $(x,y)\in X\times Y$, we find that 
\[
\|x-y\|_2\leq \dist(X,Y)+\diam(X)+\diam(Y)< (2+\rho)\max\{\diam X,\diam Y\},
\]
because $\dist(X,Y)<\rho\max\{\diam X,\diam Y\}$. This means that $x\in B_r(y)\cap D$, where $r=(2+\rho)\max\{\diam X,\diam Y\}$. Using~\cref{eq_widman_green}, we have
\begin{align*}
\int_X G(x,y)^2 dx &\leq \int_{B_r(y)\cap D}G(x,y)^2\d x \leq c_{\kappa_C}^2\|G\|_{L^2(D\times D)}^2 \int_{B_r(y)}\|x-y\|_2^{-2}\d x\\
&\leq 4\pi c_{\kappa_C}^2 r\|G\|_{L^2(D\times D)}^2.
\end{align*}
Noting that $\diam(Y)\leq r/(2+\rho)$ and $\int_Y 1 \d y\leq 4\pi ({\rm diam}(Y)/2)^3/3$, we have the following inequality for non-admissible domains $X\times Y$:
\begin{equation} \label{eq_norm_green_non_adm}
\|G\|_{L^2(X\times Y)}^2\leq \frac{2\pi^2}{3(2+\rho)^3} c_{\kappa_C}^2 r^4 \|G\|_{L^2(D\times D)}^2,
\end{equation}
where $r=(2+\rho)\max\{\diam X,\diam Y\}$.
We conclude that the Green's function restricted to a non-admissible domain has a relatively small norm when the domain itself is small. Therefore, in our approximant $\tilde{G}$ for $G$, we ignore $G$ on non-admissible domains by setting $\tilde{G}$ to be zero.

\subsection{Hierarchical admissible partition of domain}\label{sec_hierar_domain}
We now describe a hierarchical partitioning of $D\times D$ so that many subdomains are admissible domains, and the non-admissible domains are all small. For ease of notion, we may assume---without loss of generality---that $\diam D = 1$ and $D\subset [0,1]^3$; otherwise, one should shift and scale $D$. Moreover, partitioning $[0,1]^3$ and restricting the partition to $D$ is easier than partitioning $D$ directly. For the definition of admissible domains, we find it convenient to select $\rho = 1/\sqrt{3}$. 

\begin{figure}
\centering
\vspace{0.5cm}
\begin{overpic}[width=\textwidth]{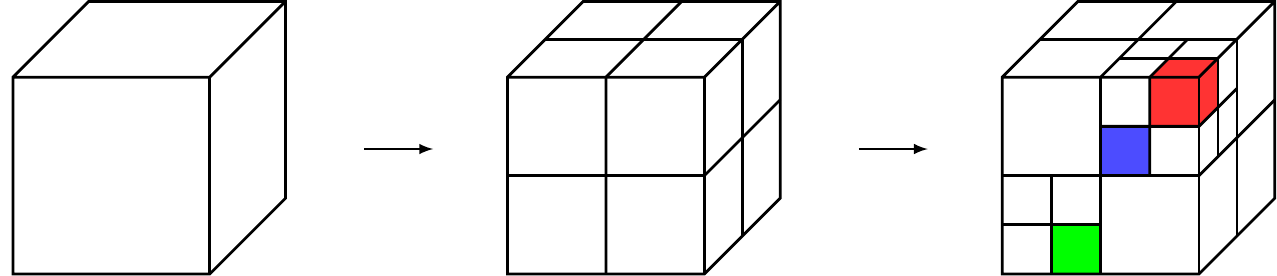}
\end{overpic}
\caption{Two levels of hierarchical partitioning of $[0,1]^3$. The blue and green domains are admissible, while the blue and red domains are non-admissible.}
\label{fig_hierar_tree}
\end{figure}

Let $I = [0,1]^3$. The hierarchical partitioning for $n$ levels is defined recursively as:
\begin{itemize}[leftmargin=*]
\item $I_{1\times 1\times 1}\coloneq I_1\times I_1\times I_1=[0,1]^3$ is the root for level $L = 0$.
\item At a given level $0\leq L\leq n-1$, if $I_{j_1\times j_2\times j_3}\coloneq I_{j_1}\times I_{j_2}\times I_{j_3}$ is a node of the tree, then it has $8$ children defined as
\[
\{I_{2 j_1+n_j(1)}\times I_{2 j_2+n_j(2)} \times I_{2j_3+n_j(3)}\mid n_j\in\{0,1\}^3\}.
\]
Here, if $I_j=[a,b]$, $0\leq a<b\leq 1$, then $I_{2j} = \left[a,\frac{a+b}{2}\right]$ and $I_{2j+1} = \left[\frac{a+b}{2},b\right]$.
\end{itemize}
The set of non-admissible domains can be given by this unwieldy expression 
\begin{equation} \label{eq_nonadm_d}
P_{\text{non-adm}} = \bigcup_{\substack{\bigwedge_{i=1}^3|j_i-\tilde{j}_i|\leq 1\\ 2^n\leq j_1,j_2,j_3\leq 2^{n+1}-1\\ 2^n\leq \tilde{j}_1,\tilde{j}_2,\tilde{j}_3\leq 2^{n+1}-1}} I_{j_1\times j_2\times j_3}\times I_{\tilde{j}_1\times \tilde{j}_2\times \tilde{j}_3},
\end{equation}
where $\land$ is the logical ``and'' operator. The set of admissible domains is given by 
\begin{equation} \label{eq_adm_d}
P_{\text{adm}} = \bigcup_{L=1}^n \Lambda(P_{\text{non-adm}}(L-1))\backslash P_{\text{non-adm}}(L)),
\end{equation}
where $P_{\text{non-adm}}(L)$ is the set of non-admissible domain for a hierarchical level of $L$ and
\[\Lambda(P_{\text{non-adm}}(L-1))=\bigcup_{\substack{I_{j_1\times j_2 \times j_3}\times I_{\tilde{j}_1\times\tilde{j}_2\times \tilde{j}_3}\\ \in P_{\text{non-adm}}(L-1)}}\, \bigcup_{n_j,n_{\tilde{j}}\in\{0,1\}^3}I_{\bigtimes_{i=1}^3 2j_i+n_j(i)}\times I_{\bigtimes_{i=1}^3 2 \tilde{j}_i+n_{\tilde{j}}(i)}.\]
Using \cref{eq_nonadm_d}-\cref{eq_adm_d}, the number of admissible and non-admissible domains are precisely $|P_{\text{non-adm}}| = (3\times 2^n-2)^3$ and $|P_{\text{adm}}| =  \sum_{\ell=1}^n 2^{6}(3\times 2^{L-1}-2)^3-(3\times 2^{L}-2)^3$.  In particular, the size of the partition at the hierarchical level $0\leq L\leq n$ is equal to $8^L$ and the tree has a total of $(8^{n+1}-1)/7$ nodes (see~\cref{fig_hierar_mat}). 

Finally, the hierarchical partition of $D\times D$ can be defined via the partition $P=P_{\text{adm}}\cup P_{\text{non-adm}}$ of $[0,1]^3$ by doing the following: 
\[
D\times D =\bigcup\limits_{\tau\times\sigma\in P} (\tau\cap D)\times(\sigma\cap D).
\]
The sets of admissible and non-admissible domains of $D\times D$ are denoted by $P_{\text{adm}}$ and $P_{\text{non-adm}}$ in the next sections.

\begin{figure}
\centering
\vspace{0.5cm}
\begin{overpic}[width=0.49\textwidth, trim=100 50 100 10, clip]{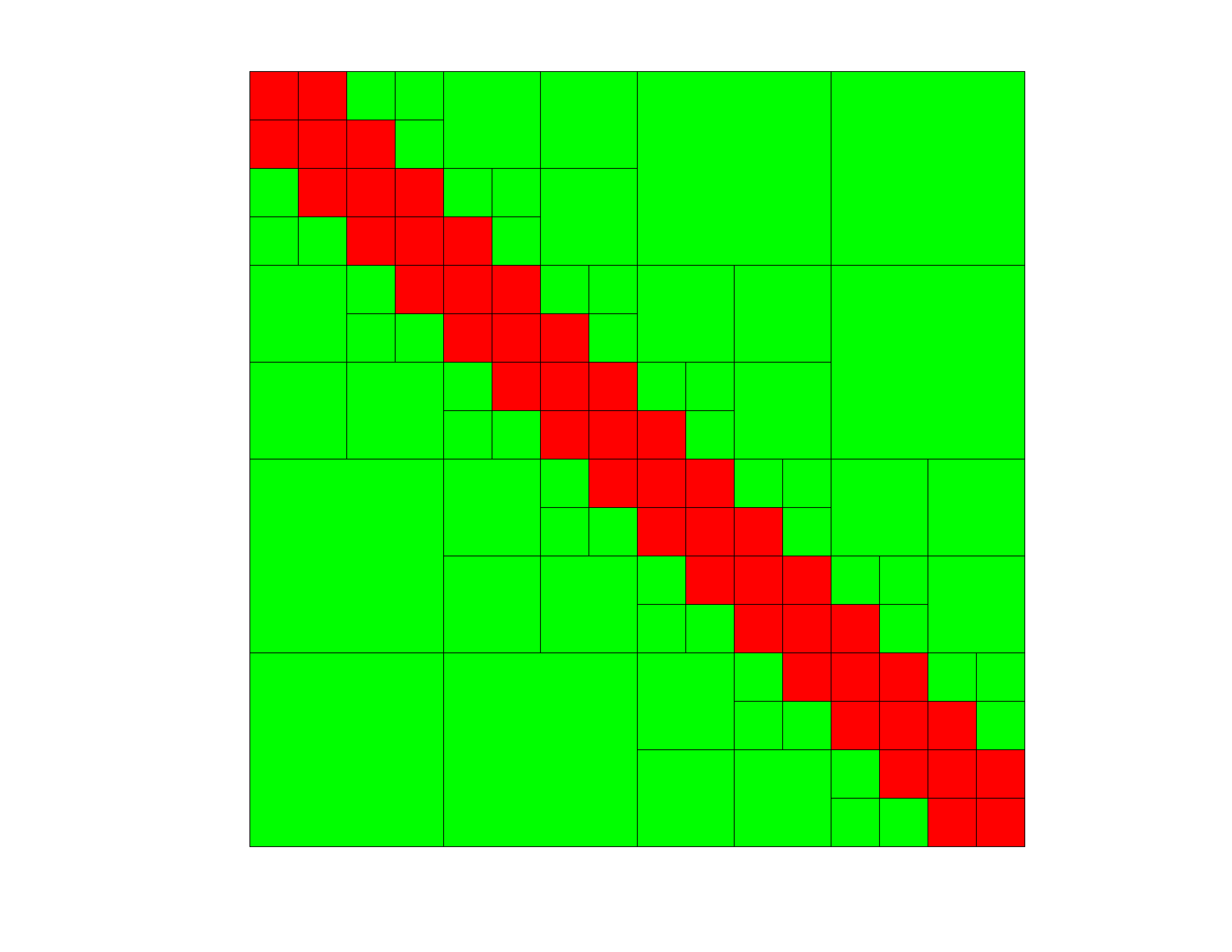}
\put(49,88){1D}
\end{overpic}
\begin{overpic}[width=0.49\textwidth, trim=100 50 100 10, clip]{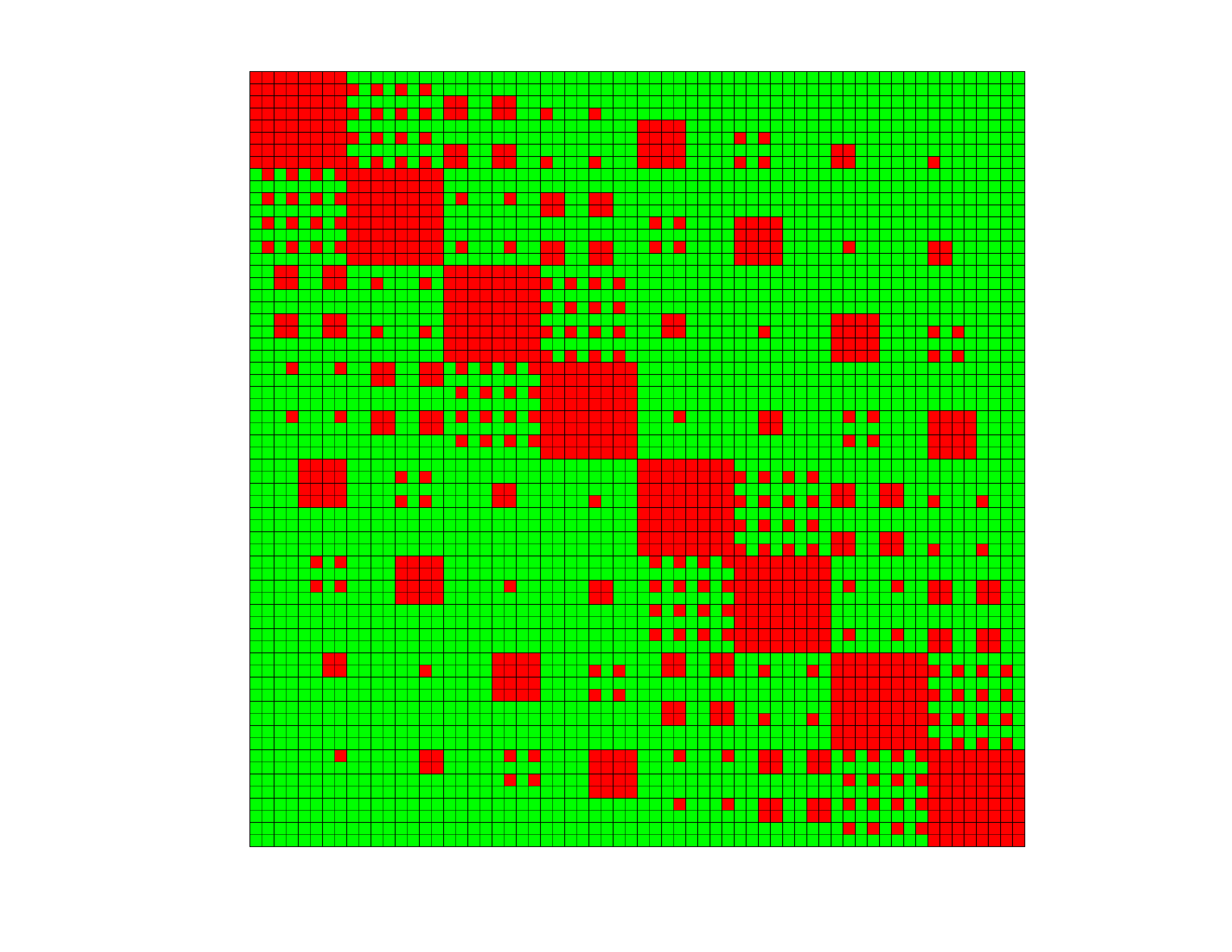}
\put(49,88){3D}
\end{overpic}
\caption{For illustration purposes, we include the hierarchical structure of the Green's functions in 1D after $4$ levels (left) and in 3D after $2$ levels (right). The hierarchical structure in 3D is complicated as this is physically a $6$-dimensional tensor that has been rearranged so it can be visualized.}
\label{fig_hierar_mat}
\end{figure}

\subsection{Recovering the Green's function on the entire domain} \label{sec_proof_approx_green}
We now show that we can recover $G$ on the entire domain $D\times D$. 

\subsubsection{Global approximation on the non-admissible set} \label{sec_approx_non_adm}
Let $n_\epsilon$ be the number of levels in the hierarchical partition $D\times D$ (see~\cref{sec_hierar_domain}). We want to make sure that the norm of the Green's function on all non-admissible domains is small so that we can safely ignore that part of $G$ (see~\cref{sec_lp_estimates}). As one increases the hierarchical partitioning levels, the volume of the non-admissible domains get smaller (see~\cref{fig_hierar_mat_1D}). 

\begin{figure}
\centering
\vspace{0.5cm}
\begin{overpic}[width=0.3\textwidth, trim=100 50 100 50, clip]{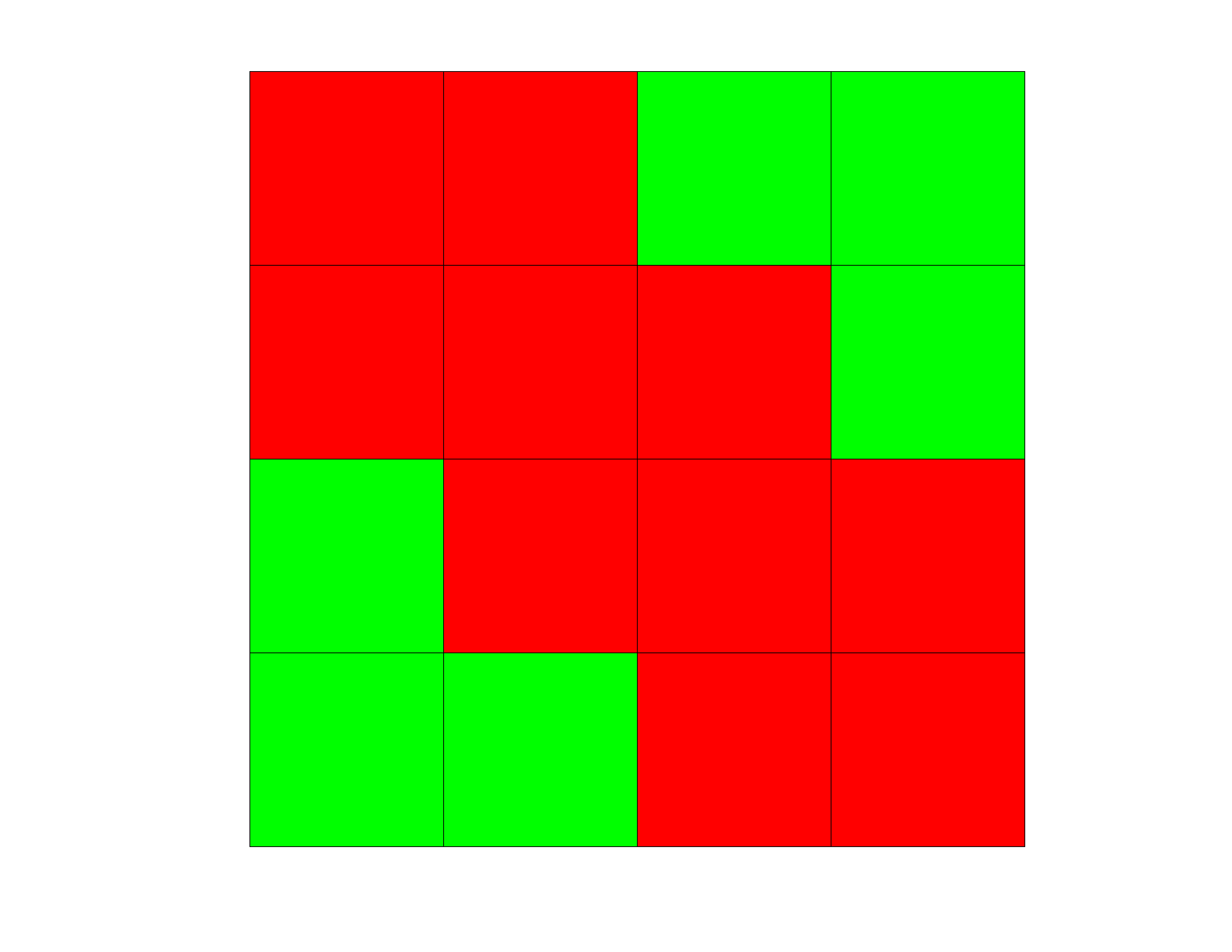}
\put(41,89){Level 2}
\end{overpic}
\begin{overpic}[width=0.3\textwidth, trim=100 50 100 50, clip]{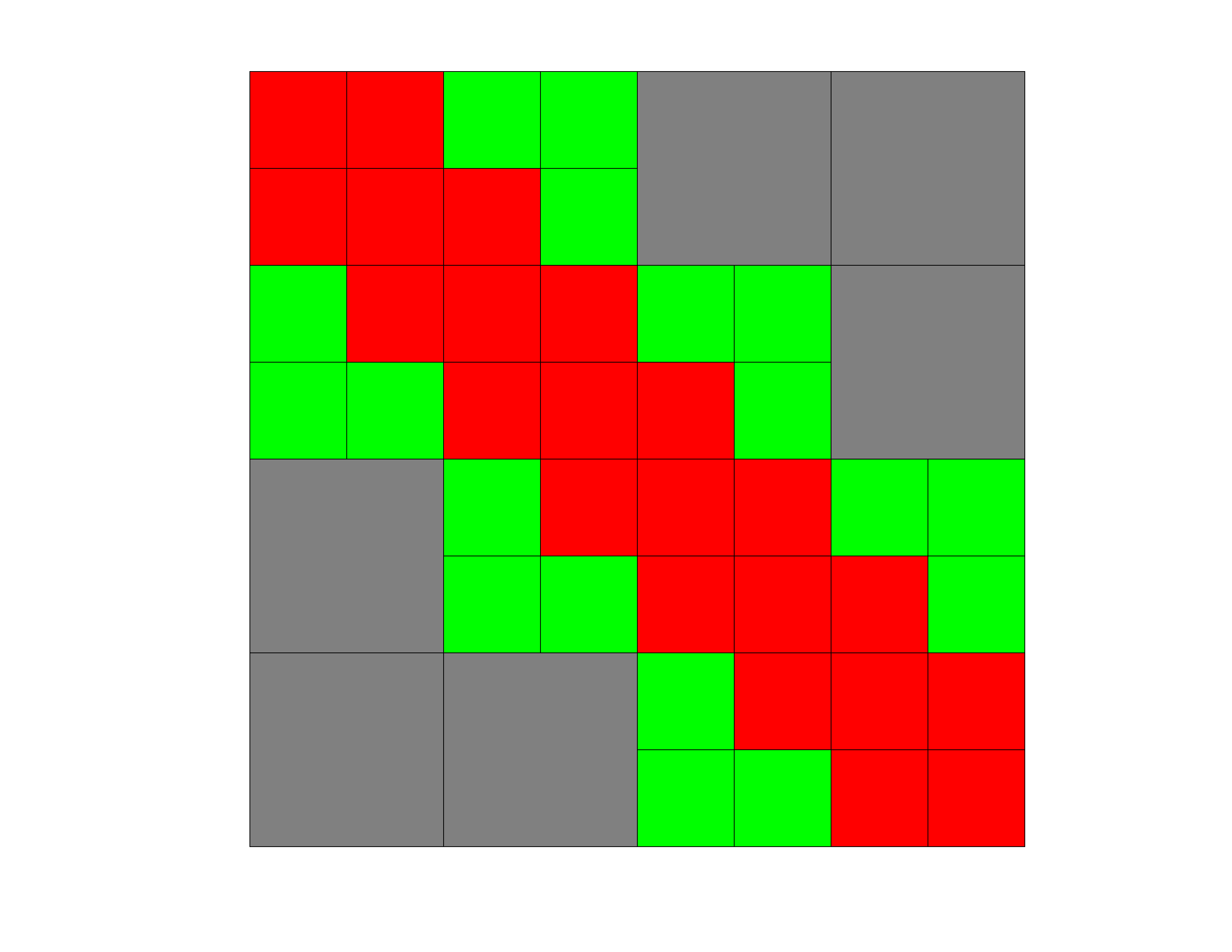}
\put(41,89){Level 3}
\end{overpic}
\begin{overpic}[width=0.3\textwidth, trim=100 50 100 50, clip]{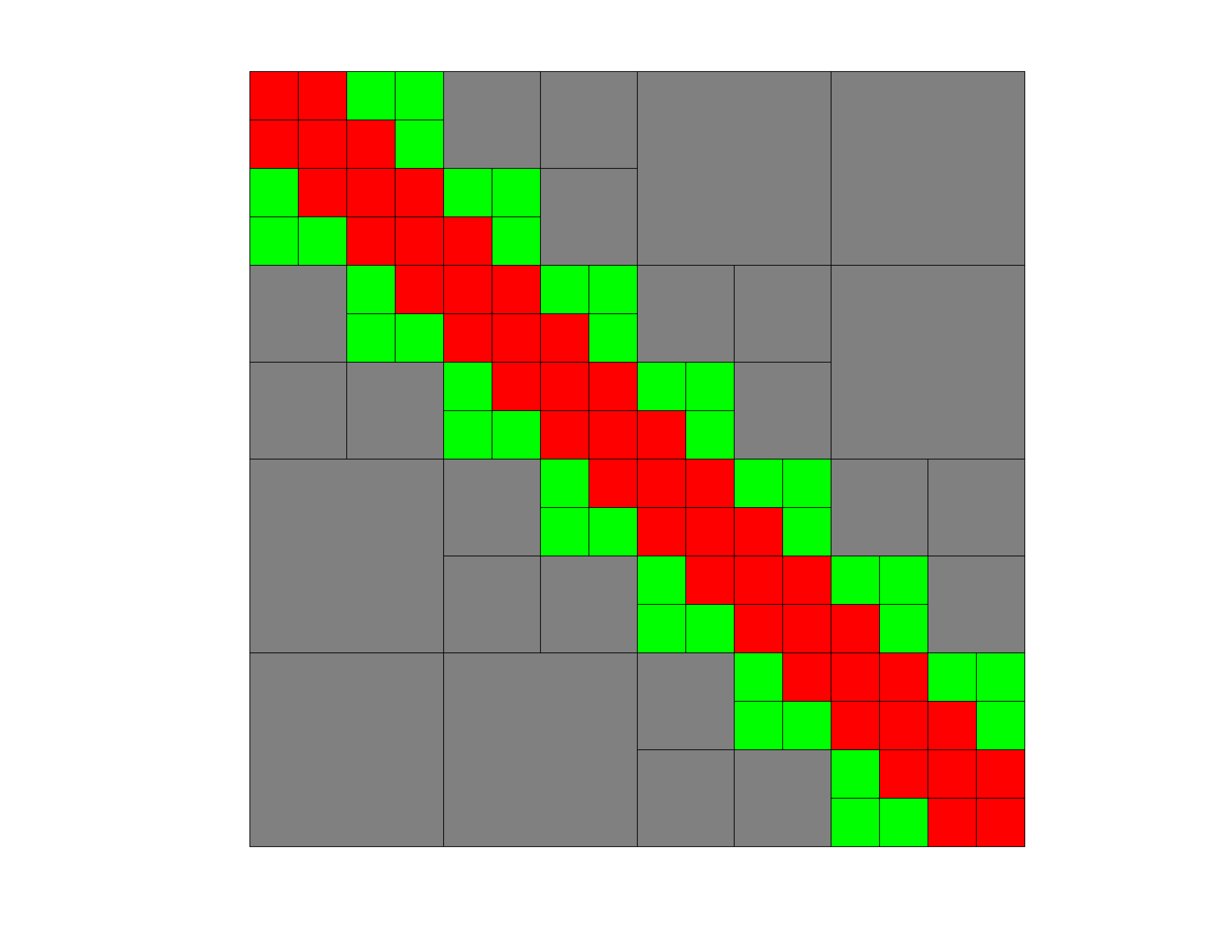}
\put(41,89){Level 4}
\end{overpic}
\caption{For illustration purposes, we include the hierarchical structure of the Green function in 1D. The green blocks are admissible domains at that level, the gray blocks are admissible at a higher level, and the red blocks are the non-admissible domains at that level. The area of the non-admissible domains decreases at deeper levels.}
\label{fig_hierar_mat_1D}
\end{figure}

Let $X\times Y\in P_{\text{non-adm}}$ be a non-admissible domain, the two domains $X$ and $Y$ have diameter bounded by $\sqrt{3}/2^{n_\epsilon}$ because they are included in cubes of side length $1/2^{n_\epsilon}$ (see \cref{sec_hierar_domain}). Combining this with~\cref{eq_norm_green_non_adm} yields
\[
\|G\|_{L^2(X\times Y)}^2\leq 2\pi^2(6+\sqrt{3})c_{\kappa_C}^2 2^{-4n_\epsilon}\|G\|_{L^2(D\times D)}^2.\]
Therefore, the $L^2$-norm of $G$ on the non-admissible domain $P_{\text{non-adm}}$ satisfies
\[\|G\|_{L^2(P_{\text{non-adm}})}^2 = \sum_{X\times Y\in P_{\text{non-adm}}}\|G\|_{L^2(X\times Y)}^2\leq 54\pi^2(6+\sqrt{3})c_{\kappa_C}^2 2^{-n_\epsilon}\|G\|_{L^2(D\times D)}^2,\]
where we used $|P_{\text{non-adm}}| = (3\times 2^{n_\epsilon}-2)^3\leq 27(2^{3n_\epsilon})$. 
This means that if we select $n_\epsilon$ to be
\begin{equation} \label{eq_level_epsilon}
n_\epsilon = \left\lceil \log_2(54\pi^2(6+\sqrt{3})c_{\kappa_C}^2)+2\log_2(1/\epsilon)\right\rceil \sim 2\log_2(1/\epsilon),
\end{equation}
then we guarantee that $\|G\|_{L^2(P_{\text{non-adm}})}\leq\epsilon\|G\|_{L^2(D\times D)}$.  We can safely ignore $G$ on non-admissible domains---by taking the zero approximant---while approximating $G$ to within $\epsilon$.

\subsubsection{Learning rate of the Green's function}
 \label{sec_green_reconst_adm}

Following \cref{sec_adm_domain}, we can construct an approximant $\tilde{G}_{X\times Y}$ to the Green's function on an admissible domain $X\times Y$ of the hierarchical partitioning using the HS randomized SVD algorithm, which requires $N_{\epsilon,X\times Y}=\smash{\mathcal{O}(\log^4(1/\epsilon))}$ input-output training pairs (see \cref{sec_adm_domain}). Therefore, the number of training input-output pairs needed to construct an approximant to $G$ on all admissible domains is given by 
\[
N_\epsilon = \sum_{X\times Y\in P_{\text{adm}}} N_{\epsilon,X\times Y} = \mathcal{O}\left(|P_{\text{adm}}|\log^4(1/\epsilon)\right),\]
where $|P_{\text{adm}}|$ denotes the total number of admissible domains at the hierarchical level $n_\epsilon$, which is given by \cref{eq_level_epsilon}. Then, we have (see~\cref{sec_hierar_domain}):
\begin{equation} \label{eq_number_admissible}
|P_{\text{adm}}| = \sum_{\ell=1}^{n_\epsilon} 2^{6}(3\times 2^{\ell-1}-2)^3-(3\times 2^{\ell}-2)^3 \leq 6^3 2^{3n_\epsilon},
\end{equation}
and, using~\cref{eq_level_epsilon}, we obtain $|P_{\text{adm}}| = \mathcal{O}(1/\epsilon^6)$. This means that the total number of required input-output training pairs to learn $G$ with high probability is bounded by 
\[
N_\epsilon = \mathcal{O}\left(\epsilon^{-6}\log^4(1/\epsilon)\right).
\]

\subsubsection{Global approximation error} \label{sec_proba_fail_Green}

We know that with $N_\epsilon = \mathcal{O}(\epsilon^{-6}\log^4(1/\epsilon))$ input-output training pairs, we can construct an accurate approximant to $G$ on each admissible and non-admissible domain. Since the number of admissible and non-admissible domains depends on $\epsilon$, we now check that this implies a globally accurate approximant that we denote by $\tilde{G}$. 

Since $\tilde{G}$ is zero on non-admissible domains and $P_{\text{adm}}\cap P_{\text{non-adm}}$ has measure zero, we have
\begin{equation} \label{eq_ineq_G}
 \|G-\tilde{G}\|_{L^2(D\times D)}^2
\leq \epsilon^2\|G\|_{L^2(D\times D)}^2+\sum_{X\times Y\in P_{\text{adm}}}\|G-\tilde{G}\|_{L^2(X\times Y)}^2.
\end{equation}
Following \cref{sec_green_reconst_adm}, if $X\times Y$ is admissible then the approximation error satisfies 
\[\|G-\tilde{G}_{X\times Y}\|_{L^2(X\times Y)}^2 \leq 12 t^2s^2\frac{k_\epsilon}{\gamma_{k_{\epsilon},X\times Y}}\sum_{j=1}^\infty\frac{\lambda_j}{\lambda_1}\epsilon^2\|G\|_{L^2(X\times \hat{Y})}^2 ,\]
with probability greater than $1-t^{-k_{\epsilon}}-e^{-2s^2 k_\epsilon}$. Here, $\hat{Y}=\{y\in D,\, \dist(y,Y)\leq \diam Y/2\sqrt{3}\}$ (see~\cref{theo_bebendorf} with $\rho=1/\sqrt{3}$). To measure the worst $\gamma_{k_\epsilon, X\times Y}$, we define
\begin{equation} \label{eq_define_gamma_eps}
\Gamma_\epsilon = \min\{\gamma_{k_\epsilon,X\times Y}:  X\times Y\in P_{\text{adm}}\}.
\end{equation}
From \cref{eq_lower_bound_gamma}, we know that $0<\Gamma_\epsilon\leq 1$ and that $1/\Gamma_\epsilon$ is greater than the harmonic mean of the first $k_\epsilon$ scaled eigenvalues of the covariance kernel $K$, i.e.,
\begin{equation} \label{eq_gamma_eps_bound}
\frac{1}{\Gamma_\epsilon} \geq \frac{1}{k_\epsilon}\sum_{j=1}^{k_\epsilon}\frac{\lambda_1}{\lambda_j},
\end{equation}

Now, one can see that $X\times \hat{Y}$ is included in at most $5^3=125$ neighbours including itself. 
Assuming that all the probability bounds hold on the admissible domains, this implies that
\begin{align*}
\sum_{X\times Y\in P_{\text{adm}}}\!\!\!\!\|G-\tilde{G}\|_{L^2(X\times Y)}^2 &\leq 
\sum_{X\times Y\in P_{\text{adm}}}\!\!\!\!\|G-\tilde{G}\|_{L^2(X\times Y)}^2 \leq 12 t^2s^2\frac{k_\epsilon}{\lambda_1\Gamma_{\epsilon}}\Tr(K)\epsilon^2\!\!\!\!\!\!\!\!\sum_{X\times Y\in P_{\text{adm}}}\!\!\!\!\|G\|_{L^2(X\times\hat{Y})}^2\\
&\leq 1500 t^2s^2\frac{k_\epsilon}{\lambda_1\Gamma_{\epsilon}}\Tr(K)\epsilon^2\|G\|^2_{L^2(D\times D)}.
\end{align*}
We then choose $t=e$ and $s=k_\epsilon^{1/4}$ so that the approximation bound on each admissible domain holds with probability of failure less than $2e^{-\sqrt{k_\epsilon}}$. Finally, using \cref{eq_ineq_G} we conclude that as $\epsilon \to 0$, the approximation error on $D\times D$ satisfies
\[\|G-\tilde{G}\|_{L^2(D\times D)} = \mathcal{O}\left(\Gamma_\epsilon^{-1/2}\log^3(1/\epsilon)\epsilon\right)\|G\|_{L^2(D\times D)},\]
with probability $\geq (1-2e^{-\sqrt{k_\epsilon}})^{6^3 2^{3n_\epsilon}}=1-\mathcal{O}(\epsilon^{\log(1/\epsilon)-6})$, where $n_\epsilon$ is given by \cref{eq_level_epsilon}. We conclude that the approximant $\tilde{G}$ is a good approximation to $G$ with very high probability.

\section{Conclusions and discussion} \label{sec_further_work}

This paper rigorously learns the Green's function associated with a PDE rather than the partial differential operator (PDO). By extending the randomized SVD to HS operators, we can identify a learning rate associated with elliptic PDOs in three dimensions and bound the number of input-output training pairs required to recover a Green's function approximately. One practical outcome of this work is a measure for the quality of covariance kernels, which may be used to design efficient kernels for PDE learning tasks.

There are several possible future extensions of these results related to the recovery of hierarchical matrices, the study of other partial differential operators, and practical deep learning applications, which we discuss further in this section.

\subsection{Fast and stable reconstruction of hierarchical matrices} \label{sec_stable_H_reconst}

We described an algorithm for reconstructing Green's function on admissible domains of a hierarchical partition of $D\times D$ that requires performing the HS randomized SVD $\mathcal{O}(\epsilon^{-6})$ times. We want to reduce it to a factor that is $\mathcal{O}(\text{polylog}(1/\epsilon))$.

For $n\times n$ hierarchical matrices, there are several existing algorithms for recovering the matrix based on matrix-vector products~\cite{boukaram2019randomized,lin2011fast,martinsson2011fast,martinsson2016compressing}. There are two main approaches: (1) The ``bottom-up'' approach: one begins at the lowest level of the hierarchy and moves up and (2) The ``top-down'' approach: one updates the approximant by peeling off the off-diagonal blocks and going down the hierarchy. The bottom-up approach requires $\mathcal{O}(n)$ applications of the randomized SVD algorithm~\cite{martinsson2011fast}. There are lower complexity alternatives that only require $\mathcal{O}(\log(n))$ matrix-vector products with random vectors~\cite{lin2011fast}. However, the algorithm in~\cite{lin2011fast} is not yet proven to be theoretically stable as errors from low-rank approximations potentially accumulate exponentially, though this is not observed in practice. For symmetric positive semi-definite matrices, it may be possible to employ a sparse Cholesky factorization~\cite{schafer2021sparse,schafer2017compression}. This leads us to formulate the following challenge: 

\fbox{\begin{minipage}[t][1.2\height][c]{\dimexpr\textwidth-15\fboxsep-2\fboxrule\relax}
\centering
{\bf Algorithmic challenge:} Design a provably stable algorithm that can recover an $n\times n$ hierarchical matrix using $\mathcal{O}(\log(n))$ matrix-vector products with high probability? 
\end{minipage}}

\medskip

If one can design such an algorithm and it can be extended to HS operators, then the $\mathcal{O}(\epsilon^{-6}\log^4(1/\epsilon))$ term in~\cref{th_Green} may improve to~$\mathcal{O}(\text{polylog}(1/\epsilon))$. This means that the learning rate of partial differential operators of the form of  \cref{eq:PDEsimple} will be a polynomial in $\log(1/\epsilon)$ and grow sublinearly with respect to $1/\epsilon$.

\subsection{Extension to other partial differential operators}
Our learning rate for elliptic PDOs in three variables (see~\cref{sec_approx_Green}) depends on the decay of the singular values of the Green's function on admissible domains~\cite{bebendorf2003existence}. We expect that one can also find the learning rate for other PDOs. 

It is known that the Green's functions associated to elliptic PDOs in two dimensions exist and satisfy the following pointwise estimate~\cite{dong2009green}:
\begin{equation} \label{eq_estimate_2d}
|G(x,y)|\leq C\left(\frac{1}{\gamma R^2}+\log\left(\frac{R}{\|x-y\|_2}\right)\right), \quad \|x-y\|_2\leq R\coloneq\frac{1}{2}\max(d_x,d_y),
\end{equation}
where $d_x=\dist(x,\partial D)$, $\gamma$ is a constant depending on the size of the domain $D$, and $C$ is an implicit constant. One can conclude that $G(x,\cdot)$ is locally integrable for all $x\in D$ with $\|G(x,\cdot)\|_{L^p(B_r(x)\cap D)}<\infty$ for $r>0$ and $1\leq p<\infty$.  We believe that the pointwise estimate in~\cref{eq_estimate_2d} implies the off-diagonal low-rank structure of $G$ here, as suggested in~\cite{bebendorf2003existence}. Therefore, we expect that the results in this paper can be extended to elliptic PDOs in two variables. 

PDOs in four or more variables are far more challenging since we rely on the following bound on the Green's function on non-admissible domains~\cite{gruter1982green}:
\[
G(x,y)\leq \frac{c(d,\kappa_C)}{\lambda_{\min}}\|x-y\|_2^{2-d}, \qquad x\neq y\in D,
\]
where $D\subset\R^d$, $d\geq 3$ is the dimension, and $c$ is a constant depending only on $d$ and $\kappa_C$. This inequality implies that the $L^p$-norm of $G$ on non-admissible domains is finite when $0\leq p < d/(d-2)$.  However, for a dimension $d\geq 4$, we have $p<2$ and one cannot ensure that the $L^2$ norm of $G$ is finite. Therefore, the Green's function may not be compatible with the HS randomized SVD.

It should also be possible to characterize the learning rate for elliptic PDOs with lower order terms (under reasonable conditions)~\cite{dong2020green,hwang2020green,kim2019green} and many parabolic operators~\cite{kim2020green} as the associated Green's functions have similar regularity and pointwise estimates. The main task is to extend~\cite[Thm.~2.8]{bebendorf2003existence} to construct separable approximations of the Green's functions on admissible domains. In contrast, we believe that deriving a theoretical learning rate for hyperbolic PDOs remains a significant research challenge for many reasons. The first roadblock is that the Green's function associated with hyperbolic PDOs do not necessarily lie in $L^2(D\times D)$. For example, the Green's function associated with the wave equation in three variables, i.e., $\L=\partial_t^2-\nabla^2$, is not square-integrable as
\[
G(x,t,y,s) = \frac{\delta(t-s-\|x-y\|_2)}{4\pi \|x-y\|_2},\qquad (x,t),(y,s)\in \R^3\times [0,\infty),
\]
where $\delta(\cdot)$ is the Dirac delta function.

\subsection{Connection with neural networks}
There are many possible connections between this work and neural networks (NNs) from practical and theoretical viewpoints. The proof of~\cref{th_Green} relies on the construction of a hierarchical partition of the domain $D\times D$ and the HS randomized SVD algorithm applied on each admissible domain. This gives an algorithm for approximating Green's functions with high probability. However, there are more practical approaches that currently do not have theoretical guarantees~\cite{feliu2020meta,gin2020deepgreen}. 

A promising opportunity is to design a NN that can learn and approximate Green's functions using input-output training pairs $\{(f_j,u_j)\}_{j=1}^N$~\cite{boulle2021data}. Once a neural network $\mathcal{N}$ has been trained such that $\|\mathcal{N}-G\|_{L^2}\leq \epsilon \|G\|_{L^2}$, the solution to $\L u = f$ can be obtained by computing the following integral:
\[
u(x) = \int_D \mathcal{N}(x,y)f(y)\d y.
\]
Therefore, this may give an efficient computational approach for discovering operators since a NN is only trained once. Incorporating a priori knowledge of the Green's function into the network architecture design could be particularly beneficial. One could also wrap the selection of the kernel in the GP for generating random functions and training data into a Bayesian framework. 

Finally, we wonder how many parameters in a NN are needed to approximate a Green's function associated with elliptic PDOs within a tolerance of $0<\epsilon<1$. Can one exploit the off-diagonal low-rank structure of Green's functions to reduce the number of parameters? We expect the recent work on the characterization of ReLU NNs' approximation power is useful~\cite{guhring2019error,petersen2018optimal,yarotsky2017error}. The use of NNs with high approximation power such as rational NNs might also be of interest to approximate the singularities of the Green's function near the diagonal~\cite{boulle2020rational}.

\begin{acknowledgements}
We want to thank Max Jenquin and Tianyi Shi for discussions. We also thank Matthew Colbrook, Abinand Gopal, Daniel Kressner, and Yuji Nakatsukasa for their feedback and suggestions on the paper. We are indebted to Christopher Earls for telling us about the idea of using Green's functions and Gaussian processes for PDE learning. We are grateful to Joel Tropp, whose suggestions led to sharper bounds for the randomized SVD, and the anonymous referees for their comments which improved the quality of the paper.
\end{acknowledgements}

\bibliographystyle{spmpsci}
\bibliography{references}

\end{document}